\newcommand{\Comp}{\mathbb{C}}
\newcommand{\Int}{\mathbb{Z}}
\newcommand{\mk}{\mathbb{K}}
\newcommand{\eps}{\epsilon}
\newcommand{\lam}{\lambda}
\newcommand{\del}{\delta}
\newcommand{\sfx}{\mathsf{X}}
\newcommand{\sfd}{\mathsf{D}}
\newcommand{\sfe}{\mathsf{E}}
\newcommand{\sfep}{\sfe'}
\newcommand{\sff}{\mathsf{F}}
\newcommand{\sft}{\mathsf{T}}
\newcommand{\sftp}{\mathsf{T}'}
\newcommand{\sfh}{\mathsf{H}}
\newcommand{\sfm}{\mathsf{M}}
\newcommand{\Mnk}{\sfm_{n,k}}
\newcommand{\Mkn}{\sfm_{k,n}}
\newcommand{\dd}{\partial}
\newcommand{\mat}{\mathrm{M}}
\newcommand{\sump}{\sum{\!\!}^{\prime}}
\newcommand{\gl}{{\mathfrak{gl}}}
\newcommand{\gln}{{\gl_n}}
\newcommand{\ssl}{{\mathfrak{sl}}}
\newcommand{\sln}{{\ssl_n}}
\newcommand{\glN}{{\gl_N}}
\newcommand{\glk}{{\gl_k}}
\newcommand{\go}{{\mathfrak{o}}}
\newcommand{\oN}{{\go_N}}
\newcommand{\spg}{{\mathfrak{sp}}}
\newcommand{\spN}{{\spg_{N}}}
\renewcommand{\gg}{{\mathfrak{g}}}
\newcommand{\gp}{{\gg'}}
\newcommand{\zg}{\mathcal{Z}(\gg)}
\newcommand{\ugn}{U(\gln)}
\newcommand{\ug}{U(\gg)}
\newcommand{\ga}{\mathfrak{a}}
\newcommand{\car}{\mathfrak{h}}
\newcommand{\card}{{\car}^*}
\newcommand{\bor}{\mathfrak{b}}
\newcommand{\pp}{\mathfrak{p}}
\newcommand{\nil}{\mathfrak{n}}
\newcommand{\lev}{\mathfrak{m}}
\newcommand{\zz}{\mathfrak{z}}
\newcommand{\A}{\mathcal{A}}
\newcommand{\weyl}{\mathcal{W}}
\newcommand{\tr}{\operatorname{tr}}
\newcommand{\sgn}{\operatorname{sgn}}
\newcommand{\Ad}{\operatorname{Ad}}
\newcommand{\ann}{\operatorname{Ann}}
\newcommand{\End}{\operatorname{End}}
\newtheorem{theorem}[equation]{Theorem}%[section]
\newtheorem{cor}[equation]{Corollary}
\newtheorem{prop}[equation]{Proposition}
\theoremstyle{remark}
\newtheorem{remark}[equation]{Remark}
\theoremstyle{definition}
\newtheorem{definition}[equation]{Definition}
\newtheorem{example}[equation]{Example}
\numberwithin{equation}{section}
\newlength{\originalbase}
\begin{document}
%
% \spacing{1.62} %%%% double spacing
%
%%%%%%% Title information %%%%%%%%%%%%%%%%%%%%%%%%%%%%%%%%%%%%%%%%%%%%%%%%%%
%
\title{Minimal polynomials of simple highest weight modules over classical
Lie algebras}
\author{Victor Protsak}
\address{Department of Mathematics, Cornell University, Ithaca, NY 14853}
\email{\tt protsak@math.cornell.edu}
\date{January 23, 2008}
%
%%%%%%%%%%%%%%%%%%%%%%%%%%%%%%%%%%%%%%%%%%%%%%%%%%%%%%%%%%%%%
%
\begin{abstract}
We completely determine the minimal polynomial of an arbitrary 
simple highest weight module $L(\lambda)$ over a complex 
classical Lie algebra $\mathfrak{g}\subseteq\mathfrak{gl}_N$ 
relative to its defining module $\pi=\mathbb{C}^{N}$. These
results are applied to ordering on primitive ideals and algebraic
properties of Howe duality correspondence.
\end{abstract}
\subjclass[2000]{Primary 17B35; Secondary 22E46}
\maketitle
%
%%%%%%%%%%%%%%%%%%%%%%%%%%%%%%%%%%%%%%%%%%%%%%%%%%%%%%%%%%%%%
%
\section{Introduction}
%
%%%%%%%%%%%%%%%%%%%%%%%%%%%%%%%%%%%%%%%%%%%%%%%%%%%%%%%%%%%%%
%
Let $\gg$ be a complex reductive Lie algebra and $\pi$ be
a finite-dimensional $\gg$-module. 
The \emph{minimal polynomial} of a $\gg$-module $V$
is a polynomial $q_{\pi,V}$ in one variable which records certain matrix
identities involving generators of the Lie algebra $\gg$ acting 
on $V$. Minimal polynomials were explicitly introduced by Oshima
in \cite{Oshima_min}, but their theory goes back to 
the work of Bracken and Green, 
O'Brien, Cant and Carey, and Gould on \emph{characteristic identities},
see \cite{Gould_characteristic} and the references therein.
We consider minimal polynomials in the setting of a classical
Lie algebra $\gg\subseteq\glN$ with the defining module $\pi=\Comp^N$
and completely determine the minimal polynomial $q_{\pi,V}$ for an arbitrary 
{simple highest weight module} $V=L(\lam)$.
Here $\gg=\glN, \oN$, or $\spN$ (in the symplectic case, $N$ is even). 
This generalizes many earlier results: an explicit formula for $q_{\pi,V}$ 
when $V=M(\lam)$ ia a Verma module, \cite{Gould_characteristic}, 
and Oshima and Oda's description of
the minimal polynomials of $V=M_{\pp}(\lam)$, the 
{generalized Verma module of scalar type} (under some technical restrictions), 
\cite{Oda_Oshima}. 

To state our results in full would require more notation, but 
they are easy to describe in a qualitative form. 
The roots of the minimal polynomial of $L(\lam)$ 
are expressed in terms of the \emph{shuffle decomposition}, 
a combinatorial object associated to the highest weight $\lam$. 
In the general linear case, it is described via a simplified version 
of the Robinson--Shensted--Knuth algorithm. The 
symplectic and orthogonal cases are analogous, but somewhat more involved.
The defining module plays a very distinguished role 
in invariant theory and representation theory of a 
classical Lie group $G$: it is
in many ways the smallest and the simplest module, and it is very useful
in constructing and studying finite-dimensional $G$-modules.
Our results may be viewed as a manifestation of this principle 
in the context of minimal polynomials and (infinite-dimensional) 
highest weight modules over the Lie algebra $\gg$.

There are several important reasons to study minimal polynomials: 
they are related to explicit description of primitive ideals in $\ug$,
noncommutative Capelli identities, the effect of Howe duality on ideals, 
\cite{Protsak_Transfer} and Section \ref{sec:transfer},  
the ordering on the primitive ideals and 
the multiplicity of the adjoint representation in the primitive quotients of
$\ug$, Section \ref{sec:primitive}. It is easy to see that for a fixed $\pi$, 
the minimal polynomial of a module $V$ depends only on its annihilator 
ideal $I=\ann_{\ug}V$ in the universal enveloping algebra
$\ug$. Hence it makes sense to speak about the minimal polynomial $q_I$
of an ideal. Moreover, if $I\subseteq J$ is an inclusion of ideals 
then the polynomial $q_J$ divides $q_I$.
By Duflo's theorem, \cite{Duflo, JAN}, 
any primitive ideal of $\ug$, where
$\gg$ is a complex reductive Lie algebra, is the annihilator of 
a simple highest weight module $L(\lam)$. Hence our main theorems
yield information about the ordering on the primitive spectrum of the 
algebra $\ug$ for a classical Lie algebra $\gg$, some of it new. 
It turns out that for $L(\lam)$ having the infinitesimal character
of the trivial representation, the knowledge of the minimal polynomial 
is equivalent to the knowledge of the \emph{$\tau$-invariant} 
of the module $L(\lam)$, but for a singular highest weight $\lam$, 
the minimal polynomial can be a stronger invariant. 

Another motivation for studying the minimal polynomial comes
from the theory of the reductive dual correspondence 
(or \emph{Howe duality}),  \cite{Howe_Theta, Howe_Remarks}. 
The relation between the centers of
the universal enveloping algebras of two Lie algebras forming
a reductive dual pair is fairly well understood 
through the theory of \emph{Capelli identities}, 
\cite{Howe_Umeda, Itoh_Advances, Molev_Nazarov}. It has important 
applications to harmonic analysis on reductive Lie groups,  
\cite{Przebinda_CapelliHC}.
In \cite{Protsak_Transfer} we have studied a more general situation: the effect
of Howe duality on a general, not necessarily centrally generated, ideal.
Here we prove a simple relation between the minimal polynomials
of the modules $V$ and $V'$ that correspond to each other 
under an algebraic version of the Howe duality. 

This paper is organized as follows. In Section \ref{sec:projection} 
we collect
some mostly known facts about the Harish-Chandra projection
map in the relative setting. The main result there, 
Proposition \ref{prop:criterion}, is 
a criterion to determine whether a given $ad(\gg)$-invariant subspace 
$T\subseteq \ug$ annihilates $L(\lam)$. 
In Section \ref{sec:minimal} 
we define the minimal polynomial, Definition \ref{def:minpol},
and explain our approach to computing
it through the use of formal resolvents, Proposition \ref{prop:technique}.
The next two sections contain our main results, first in the general 
linear case, Theorem \ref{thm:minpol_gl},
and then in the symplectic and orthogonal cases, 
Theorem \ref{thm:minpol_gg}. A key role in our analysis is played by
a certain relative Harish-Chandra projection map that allows us to
relate minimal polynomials for modules over $\gln$ and 
$\gl_{n-1}$ and apply induction. For an irreducible
reductive dual pair of Lie algebras $(\gg, \gp)$, we establish
in Section \ref{sec:transfer}
a relation between the minimal polynomials of modules over $\gg$
and $\gp$ in an algebraic Howe duality with each other.
Section \ref{sec:primitive} is devoted to applications to primitive ideals.
%
%%%%%%%%%%%%%%%%%%%%%%%%%%%%%%%%%%%%%%%%%%%%%%%%%%%%%%%%%%%%%
%
\section{Harish-Chandra projection}\label{sec:projection}
%
%%%%%%%%%%%%%%%%%%%%%%%%%%%%%%%%%%%%%%%%%%%%%%%%%%%%%%%%%%%%%
In this paper, the ground field is $\Comp$, although many results
extend to the case of an arbitrary field $\mk$ of characteristic zero. 
The case of the field $\Comp(t)$ of rational functions in several variables
seems especially promising for applications to the study of prime
ideals in enveloping algebras. Such extensions hold
under assumption that Lie algebras are split over $\mk$, with little
change in the proofs.

Let $\gg$ be a split reductive Lie algebra, $\car$
a Cartan subalgebra, $R\subset\card$ be the root system of $(\gg,\car)$
and $\gg_{\alpha}$ be the root subspace corresponding to $\alpha\in R$. 
Choose a system of simple roots $B\subset R$ and denote by $R^+$ and $R^-$
the corresponding subsets of $R$ consisting of positive and negative roots. 
For any subset $S\subset B$ we let $R_S=\Int S\cap R$. Then $R_S$ is
a root system and we consider the following subspaces of $\gg$:
\begin{displaymath}
\nil^+_S=\bigoplus_{\alpha\in R^+\setminus R_S}\gg_{\alpha}, \quad
\nil^-_S=\bigoplus_{\alpha\in R^-\setminus R_S}\gg_{\alpha}, \quad
\lev_S=\car\oplus\bigoplus_{\alpha\in R_S}\gg_{\alpha}.
\end{displaymath}
It is well known that $\pp_S=\lev_S\oplus\nil^+_S$ is a standard
parabolic of $\gg$ and all standard parabolics arise in this way.
In particular, for $S=\emptyset$ we get $\lev_S=\car, \pp_S=\bor$
is the Borel subalgebra, and $\nil^+_S=\nil$ is its nilradical.

Denote by $U(\gg)$ the universal enveloping algebra of $\gg$.
The Lie algebra $\gg$ has a triangular decomposition
$\gg=\nil^{+}_{S}\oplus\lev_{S}\oplus\nil^{-}_{S}$ and the 
Poincare--Birkhoff--Witt (PBW) theorem gives rise to a canonical isomorphism 
\begin{equation}
\ug\simeq (\nil^-_S\ug+\ug\nil^+_S)\oplus U(\lev_S). \label{eq:decomp}
\end{equation}
Let $\zz_S$ be the center of the Levi factor $\lev_S$. Then
$\zz_S$ is the subspace of $\car$ which is the annihilator
of $S\subset\car^*$. It is clear that the adjoint action of
$\zz_S$ on $\ug$ is semisimple, and the subspace of invariants
under this action is a subalgebra $\ug^{\zz_S}\subset\ug$
spanned by the $ad(\car)$-weight vectors of $\ug$
whose weight belongs to $\Int S$. 
\begin{definition}
The relative Harish-Chandra projection map 
$pr_{\gg/\lev}: \ug\to U(\lev)$ is the projection onto the second
summand in the decomposition \eqref{eq:decomp}.
\end{definition}
The map $pr_{\gg/\car}$, which we call the (absolute) 
Harish-Chandra projection map, is well known in representation theory.
Its restriction to the center $\zg$ of the
universal enveloping algebra $\ug$ is an injective algebra homomorphism
from $\zg$ to $U(\car)$ whose image consists of $W$-invariants
in $U(\car)$ with respect to the shifted action of the Weyl
group $W$. Composing $pr_{\gg/\car}$ with the algebra automorphism
of $U(\car)\simeq P(\card)$ induced by the $\rho$-shift, one
obtains the \emph{Harish-Chandra homomorphism}, in fact, an isomorphism
between $\zg$ and the algebra $P(\card)^W$ of $W$-invariant 
polynomial functions on $\card$. The Harish-Chandra homomorphism arises
from the action of $\zg$ on highest weight modules as follows.
Let $M$ be a highest weight module over $\gg$ with highest weight
vector $v_\lam$. Thus $M$ is generated by $v_{\lam}$ as a $\ug$-module,
$\nil v_{\lam}=0$, and $hv_{\lam}=\lam(h)v_{\lam}$ for $h\in\car$. Denote
by $M_{<}$ the linear span of all weight vectors in $M$ of weight 
lower than $\lam$, then $M=\Comp v_{\lam}\oplus M_{<}$. 
Let $v^*$ be the linear 
functional on $M$ which vanishes on $M_{<}$ and such that 
$\langle v^*,v\rangle=1$. Then for any element $u\in\ug$, we have
an equality
\begin{equation} \label{eq:projhw}
\langle v^*, u v_{\lam}\rangle = 
\langle v^*, pr_{\gg/\car}(u)v_{\lam}\rangle.
\end{equation}
The associative algebra $U(\car)$ is canonically identified with the
commutative algebra of polynomial functions on $\card$, 
and we denote by $ev_\lam$ the homomorphism from $U(\car)$ to $\Comp$ which
is the evalution at $\lam\in\card$.
The identity \eqref{eq:projhw} says that for any $u\in\ug$, 
the coefficient of $v_{\lam}$ in the expansion of $u v_{\lam}$ with 
respect to a basis of $M$ consisting of weight vectors is
$ev_\lam(pr_{\gg/\car}(u))$. 

A few elementary properties of the relative projection map are collected
in the following proposition (cf. \cite{JAN}, 5.12).
\begin{prop} \label{prop:projprop}
Fix a subset $S\subset B$ and let $\lev=\lev_S$. 
\begin{enumerate}
\item
The map $pr_{\gg/\lev}$ is a homomorphism of $(U(\lev),U(\lev))$-bimodules.
Its restriction to the subalgebra  $\ug^{\zz}\subset\ug$ 
is an algebra homomorphism.
\item
If $u\in\nil^-_S\ug$ or $u\in\ug\nil^+_S$ then $pr_{\gg/\lev}(u)=0$.
\item
If $u\in\ug$ is an $ad(\car)$-eigenvector of weight $\mu$ and 
$\mu\notin\Int S$ then $pr_{\gg/\lev}(u)=0$. 
\end{enumerate}
\end{prop}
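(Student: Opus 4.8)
The plan is to establish each of the three parts directly from the PBW-type decomposition \eqref{eq:decomp}, keeping track of the $\ad(\car)$-grading throughout.

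For part (2), I would argue that the two summands in \eqref{eq:decomp} are the image of the projection and its kernel respectively: the kernel of $pr_{\gg/\lev}$ is precisely $\nil^-_S\ug+\ug\nil^+_S$. So the claim reduces to showing that $\nil^-_S\ug$ and $\ug\nil^+_S$ are both contained in $\nil^-_S\ug+\ug\nil^+_S$, which is immediate. (One should note that the decomposition \eqref{eq:decomp} identifies $U(\lev)$ with a \emph{complement} to $\nil^-_S\ug+\ug\nil^+_S$ inside $\ug$, so being in the kernel is the same as lying in that first summand.)

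For part (3), the key observation is that the decomposition \eqref{eq:decomp} is a decomposition of $\ad(\car)$-modules. Indeed, $\nil^{\pm}_S$, $\lev_S$, and hence $\nil^-_S\ug$, $\ug\nil^+_S$ and $U(\lev_S)$ are all $\ad(\car)$-stable, so both summands in \eqref{eq:decomp} are $\ad(\car)$-submodules and the projection $pr_{\gg/\lev}$ is $\ad(\car)$-equivariant. Now $U(\lev_S)$ is spanned by $\ad(\car)$-weight vectors whose weights lie in $\Int R_S = \Int S$ (since $\lev_S = \car \oplus \bigoplus_{\alpha \in R_S}\gg_\alpha$ and weights multiply additively under the product in $\ug$). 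Therefore if $u$ is an $\ad(\car)$-eigenvector of weight $\mu\notin\Int S$, its image $pr_{\gg/\lev}(u)$ is a weight vector of the same weight $\mu$ inside $U(\lev_S)$, which forces it to be $0$. This is exactly the statement that $pr_{\gg/\lev}$ kills all weight components outside $\Int S$, and it also re-proves (via decomposing into weight spaces) that $pr_{\gg/\lev}$ restricted to $\ug^{\zz_S}$ — the span of weight vectors with weight in $\Int S$ — is the identity on the $U(\lev_S)$-part.

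For part (1), the bimodule statement is the least subtle: $pr_{\gg/\lev}$ is $\Comp$-linear by construction, and for $x, y\in U(\lev_S)$ and $u\in\ug$ one has $pr_{\gg/\lev}(xuy) = x\,pr_{\gg/\lev}(u)\,y$ because left and right multiplication by elements of $U(\lev_S)$ preserves each of the summands $\nil^-_S\ug+\ug\nil^+_S$ and $U(\lev_S)$ in \eqref{eq:decomp} — the first since $\nil^-_S\ug$ is a left ideal times $\nil^-_S$ on the left and $\ug\nil^+_S$ is a right ideal, together with $U(\lev_S)\nil^-_S \subseteq \nil^-_S\ug$ (as $\lev_S$ normalizes $\nil^-_S$) and symmetrically on the right; the second since $U(\lev_S)$ is a subalgebra. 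The algebra-homomorphism claim on $\ug^{\zz_S}$ is where the real content lies, and I expect it to be the main obstacle: it is \emph{not} true that $pr_{\gg/\lev}$ is multiplicative on all of $\ug$. The argument I would give: for $u, u'\in\ug^{\zz_S}$, write $u = pr_{\gg/\lev}(u) + n$ with $n\in\nil^-_S\ug+\ug\nil^+_S$, and expand $uu' = pr_{\gg/\lev}(u)\,u' + n u'$. By the bimodule property just proved, $pr_{\gg/\lev}(pr_{\gg/\lev}(u)\,u') = pr_{\gg/\lev}(u)\,pr_{\gg/\lev}(u')$, so it remains to show $pr_{\gg/\lev}(n u') = 0$. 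Since $u'\in\ug^{\zz_S}$, it suffices to treat $n\in\nil^-_S\ug$ and $n\in\ug\nil^+_S$ separately and to know that the relevant one-sided ideals absorb $\ug^{\zz_S}$ appropriately; the point is that $\nil^+_S\,\ug^{\zz_S}\subseteq\nil^-_S\ug+\ug\nil^+_S$, which follows because $\ug^{\zz_S}$ is spanned by weight vectors of weight in $\Int S$ while $\nil^+_S$ contributes a positive root outside $R_S$, so any product $\nil^+_S$-vector times a weight-in-$\Int S$ vector, when rewritten in the PBW basis adapted to \eqref{eq:decomp}, has no $U(\lev_S)$-component (its weight is not in $\Int S$, and we invoke part (3)). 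Chasing this through — carefully using that weights are additive and that $\Int S$ is closed under addition — yields $pr_{\gg/\lev}(nu')=0$ and hence multiplicativity. Throughout, the only genuinely delicate bookkeeping is keeping the left/right ideal absorption statements straight and confirming that the $\ad(\car)$-weight argument of part (3) applies to the products that arise; this is the step I would write out most carefully.
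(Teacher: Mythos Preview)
Your arguments for parts (2), (3), and the bimodule half of (1) match the paper's almost verbatim: the paper simply observes that $\ad(\lev_S)$ stabilizes $\nil^{\pm}_S$ and $\lev_S$, hence both summands in \eqref{eq:decomp} are $(U(\lev_S),U(\lev_S))$-subbimodules, and that $\ad(\car)$-equivariance together with the fact that the weights of $U(\lev_S)$ lie in $\Int S$ gives (3). The paper says nothing further about the algebra-homomorphism claim beyond ``Assertion (1) follows,'' so on that point you are already supplying more detail than the paper does. (One terminological slip: $\nil^-_S\ug$ is a \emph{right} ideal of $\ug$ and $\ug\nil^+_S$ is a \emph{left} ideal, not the other way around; your bimodule-stability argument is nevertheless correct once this is straightened out.)

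There is, however, a genuine gap in your multiplicativity argument. You write $n=n_1+n_2$ with $n_1\in\nil^-_S\ug$ and $n_2\in\ug\nil^+_S$ and treat the two pieces separately. The $n_1$ case is fine (right ideal). For $n_2=\sum v_k x_k$ with $x_k\in\nil^+_S$, you correctly observe that $x_k u'$ has weight outside $\Int S$ and hence lies in the kernel $\nil^-_S\ug+\ug\nil^+_S$. But then $v_k\cdot(x_k u')$ need not lie in the kernel: the kernel is \emph{not} a left ideal, so left-multiplying by an arbitrary $v_k\in\ug$ can produce a nonzero $U(\lev_S)$-component. The weight argument alone, as you have stated it, does not close this.

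The fix is a small sharpening of exactly the observation you made. Choose $z\in\zz_S$ with $\alpha(z)>0$ for all $\alpha\in R^+\setminus R_S$; this induces a grading on $\ug$ in which $U(\lev_S)$ sits in degree $0$ and $\nil^{\pm}_S$ in strictly positive/negative degree. Since $x_k u'$ has strictly positive $z$-degree, every PBW monomial in its expansion must carry at least one $\nil^+_S$ factor; that is, $\nil^+_S\,\ug^{\zz_S}\subseteq\ug\nil^+_S$, not merely $\subseteq$ kernel. Now $\ug\nil^+_S$ \emph{is} a left ideal, so $v_k\cdot(x_k u')\in\ug\nil^+_S$ and $pr_{\gg/\lev}(n_2 u')=0$ as desired. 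Equivalently, this grading argument shows directly that $(\nil^-_S\ug+\ug\nil^+_S)\cap\ug^{\zz_S}=\nil^-_S\ug\cap\ug^{\zz_S}=\ug\nil^+_S\cap\ug^{\zz_S}$, so the kernel restricted to $\ug^{\zz_S}$ is simultaneously a left and a right ideal there — which is the cleanest way to see multiplicativity and avoids the case split altogether.
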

\begin{proof}
Since the adjoint action of $\lev_S$ on $\ug$ stabilizes 
$\nil^-_S,\nil^+_S$, and $\lev_S$, we see that both summands 
in the decomposition \eqref{eq:decomp} are invariant under
the left and right multiplication by $\lev_S$, hence
they are $(U(\lev_S),U(\lev_S))$ subbimodules of $\ug$. 
Assertion (1) follows.

Assertion (2) is immediate from the definition. Since
both summands in the decomposition \eqref{eq:decomp} are
also $ad(\car)$-invariant, $pr_{\gg/\lev}$ is an $ad(\car)$-equivariant
map. The weights of $ad(\car)$ on $U(\lev_S)$ are precisely
$\Int S$, which implies assertion (3). (When $S=\emptyset$, we
let $\Int S=\{0\}$.)
\end{proof}
For any $\lam\in\card$ there is a unique up to an isomorphism
simple highest weight module $L(\lam)$ with highest weight $\lam$.
The next proposition gives a powerful criterion for determining whether 
an $ad(\gg)$-invariant subspace of $\ug$ annihilates $L(\lam)$.
\begin{prop} \label{prop:criterion}
Let $P\subset\ug$ be an $ad(\gg)$-invariant subspace, $P_0$
the zero weight space of $P$, and $\lam\in\card$ a highest weight.
Then the following conditions are equivalent:
\begin{enumerate}
\item
$P$ annihilates $L(\lam)$;
\item
$pr_{\gg/\car}(p)$ vanishes at $\lam$ for all $p\in P$;
\item
$pr_{\gg/\car}(p)$ vanishes at $\lam$ for all $p\in P_0$.
\end{enumerate}
\end{prop}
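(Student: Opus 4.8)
The plan is to prove the chain of implications $(1)\Rightarrow(2)\Rightarrow(3)\Rightarrow(1)$, with the last implication carrying the real content. The first implication is essentially a restatement of the Harish-Chandra identity \eqref{eq:projhw}: if $P$ annihilates $L(\lam)$ then in particular $p\,v_\lam=0$ in $L(\lam)$ for every $p\in P$, so the coefficient of the highest weight vector in $p\,v_\lam$ vanishes, which by \eqref{eq:projhw} and the discussion following it says precisely that $ev_\lam(pr_{\gg/\car}(p))=0$, i.e. $pr_{\gg/\car}(p)$ vanishes at $\lam$. The implication $(2)\Rightarrow(3)$ is trivial since $P_0\subseteq P$.

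For $(3)\Rightarrow(1)$, the key point is to exploit $ad(\gg)$-invariance to reduce an arbitrary element of $P$ to the zero weight space. First I would decompose $P$ into $ad(\car)$-weight spaces, $P=\bigoplus_\mu P_\mu$; by Proposition \ref{prop:projprop}(3) (taking $S=\emptyset$), $pr_{\gg/\car}$ already annihilates $P_\mu$ for every nonzero $\mu$, so hypothesis (3) is equivalent to saying $pr_{\gg/\car}(p)$ vanishes at $\lam$ for \emph{all} $p\in P$, which is exactly (2); thus the real task is $(2)\Rightarrow(1)$. To show $P$ annihilates $L(\lam)$, it suffices to show $P\,v_\lam=0$, because $L(\lam)=\ug v_\lam$ and $P$ is an ideal-like object — more precisely, since $P$ is $ad(\gg)$-invariant, for $u\in\ug$ and $p\in P$ we can write $up = \sum_i p_i u_i$ with $p_i\in P$ (using that $\ad$-invariance lets us commute $\ug$ past $P$ at the cost of staying inside $P$), so $p\,(u\,v_\lam)$ lies in the span of the $p_i\, v_\lam$; hence $P\,v_\lam=0$ forces $P\,L(\lam)=0$.

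So everything comes down to: if $pr_{\gg/\car}(p)$ vanishes at $\lam$ for all $p\in P$, then $p\,v_\lam=0$ in $L(\lam)$ for all $p\in P$. Here I would again split $p$ into weight components $p=\sum_\mu p_\mu$. For $\mu\not\le 0$ the vector $p_\mu v_\lam$ has weight $\lam+\mu\not\le\lam$, which is impossible in a highest weight module unless it is zero; in fact for $\mu$ not a sum of negative roots it is automatically zero, and we may reorganize so that only $\mu\le 0$ survive. For $\mu<0$ strictly (a nonzero sum of negative roots), $p_\mu v_\lam\in L(\lam)_{<}$ lies strictly below the highest weight. For $\mu=0$, write $p_0 = n_- + z + $ (terms in $\ug\nil^+$) using \eqref{eq:decomp} with $S=\emptyset$, where $z=pr_{\gg/\car}(p_0)\in U(\car)$ and $n_-\in\nil^-\ug$; then $p_0 v_\lam = ev_\lam(z) v_\lam + (\text{lower weight vectors})$, and by hypothesis $ev_\lam(z)=0$. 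Thus in all cases $p\,v_\lam\in L(\lam)_{<}$, i.e. $P\,v_\lam\subseteq L(\lam)_{<}$. Now iterate: since $P$ is $ad(\gg)$-invariant, the subspace $P\,L(\lam)$ is a $\gg$-submodule of $L(\lam)$ (this uses $up\in P\ug$ as above), and it does not contain $v_\lam$ because $P v_\lam\subseteq L(\lam)_<$ and more generally every vector of $P L(\lam)$ is a combination of weight vectors of weight $<\lam$...

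\noindent — the cleanest way to close this is to observe that $P\,L(\lam)$ is a proper $\gg$-submodule of $L(\lam)$: it is a submodule by $ad(\gg)$-invariance, and it is proper because $v_\lam\notin P\,L(\lam)$, since any element of $P\,L(\lam)$ expanded in a weight basis of $L(\lam)$ has vanishing $v_\lam$-coefficient (apply $v^*$ and use \eqref{eq:projhw}: for $p\in P$ and $w\in L(\lam)$, $\langle v^*, p\,u\,v_\lam\rangle$ reduces via $ad$-invariance to evaluations $ev_\lam(pr_{\gg/\car}(p_i))=0$). Since $L(\lam)$ is simple, $P\,L(\lam)=0$. The main obstacle is the bookkeeping in this last step — correctly using $ad(\gg)$-invariance to see that $P\,L(\lam)$ is a submodule and that the functional $v^*$ kills it — rather than any deep idea; the rest is the standard PBW/Harish-Chandra calculus already set up in \eqref{eq:decomp}, \eqref{eq:projhw}, and Proposition \ref{prop:projprop}.
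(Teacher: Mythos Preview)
Your proof is correct and follows essentially the same strategy as the paper: show $Pv_\lam\subseteq L(\lam)_{<}$ via the weight decomposition of $P$ and the PBW splitting, then use $ad(\gg)$-invariance to conclude that $P\,L(\lam)$ is a proper submodule of the simple module $L(\lam)$. The paper handles your ``bookkeeping in the last step'' more cleanly by writing the two-sided ideal as $\langle P\rangle = U(\nil^{-})\,P\,U(\bor)$, so that $\langle P\rangle L(\lam)=U(\nil^{-})\,P\,v_\lam\subseteq U(\nil^{-})\,L(\lam)_{<}\subseteq L(\lam)_{<}$ in one stroke---this is exactly what makes your parenthetical claim ``$\langle v^*, p\,u\,v_\lam\rangle$ reduces via $ad$-invariance to evaluations $ev_\lam(pr_{\gg/\car}(p_i))=0$'' rigorous.
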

\begin{proof}
Clearly, (2) implies (3). We will prove that (3) implies (1) and
(1) implies (2). Suppose that $P$ annihilates $L(\lam)$, then 
it follows in particular that $Pv_\lam=0$. From \eqref{eq:projhw}
$pr_{\gg/\car}(P) v_\lam=0$, therefore (2) holds.

The $ad(\gg)$-invariance of $P$ implies that for any subspace 
$\ga\subseteq\gg, \ga P=P\ga+P$. By the Poincare--Birkhoff--Witt theorem,
$\ug=U(\nil^{-})U(\bor),$ so the two-sided ideal $\langle P\rangle=\ug P\ug$ 
generated by the subspace $P$ is equal to $U(\nil^{-})PU(\bor)$. Applying 
the elements of this ideal to the highest weight vector $v_\lam$, we find that
\begin{equation*}
\langle P\rangle L(\lam)=\ug P\ug v_\lam=U(\nil^{-})PU(\bor)v_\lam=
U(\nil^{-})Pv_\lam.
\end{equation*}
Now let us decompose $P$ as $P_{+}\oplus P_0\oplus P_{-}$, where $P_{+}$ 
(respectively, $P_{-}$) is spanned by the positive (respectively, negative)
$ad(\car)$-weight vectors in $P$. The action of $P_{+}$ annihilates $v_\lam$, 
so $Pv_\lam=pr_{\gg/\car}(P_0)v_\lam+L(\lam)_{<}$. Suppose that 
condition (3) holds, so that $pr_{\gg/\car}(P_0)$ acts by $0$ on
$v_\lam$, then the submodule 
$\langle P\rangle L(\lam)$ of $L(\lam)$ is contained in $L(\lam)_{<}$. 
Therefore, it is a proper submodule of a simple module, 
implying that $\langle P\rangle$ annihilates $L(\lam)$. 
\end{proof}
The second condition  of the proposition says that $\lam$ is
in Joseph's ``characteristic variety'' of $P$. 
In a different language,
the weight $\lam$ appears in  the Jacquet functor of the 
Harish-Chandra bimodule $\ug/\langle P\rangle$ 
with respect to the nilradical $(\nil^{-},\nil^{+})\subset(\gg,\gg)$,
cf \cite{Joseph_Characteristic, Wallach}. 

Another important property of Harish-Chandra projections
is the \emph{composition formula}. 
\begin{prop}\label{prop:composition}
Let $S\subseteq B$ and
suppose that the root system $R_S$ admits a basis (i.e. a system
of simple roots) $B_S\subseteq B$. If $S'\subseteq B_S$ then
we may construct a Levi subalgebra $\lev'=\lev_{S'}$ of the reductive Lie 
algebra $\lev$ as above and define the Harish-Chandra projection 
$pr_{\lev/\lev'}: U(\lev)\to U(\lev')$. Moreover, $\lev'$ is also
the Levi subalgebra of the Lie algebra $\gg$ itself associated with the
subset $S'\subseteq B$, so that the map 
$pr_{\gg/\lev'}: \ug\to U(\lev')$ is defined. Then the following
composition formula holds:
\begin{equation}\label{eq:composition}
pr_{\lev/\lev'}\circ pr_{\gg/\lev}=pr_{\gg/\lev'}.
\end{equation}
\end{prop}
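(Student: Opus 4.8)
The plan is to prove the composition formula by comparing the PBW-type direct sum decompositions of $\ug$ induced by the two Levi subalgebras $\lev = \lev_S$ and $\lev' = \lev_{S'}$, and then checking that the composite $pr_{\lev/\lev'}\circ pr_{\gg/\lev}$ kills exactly the complementary summand that $pr_{\gg/\lev'}$ is designed to kill. First I would set up notation: since $S' \subseteq B_S$ we have $R_{S'} \subseteq R_S$, hence $R^+ \setminus R_{S'} = (R^+ \setminus R_S) \sqcup (R_S^+ \setminus R_{S'})$, where $R_S^+ = R_S \cap R^+$. This gives the nested structure $\nil^+_{S'} = \nil^+_S \oplus \mathfrak{n}^+_{S',\lev}$, where $\mathfrak{n}^+_{S',\lev}$ is the nilradical of the parabolic of $\lev$ associated to $S'$, and likewise for the negative parts; meanwhile $\lev' = \lev'_{S'}$ sits inside $\lev$. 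The key bookkeeping identity I want is that the decomposition \eqref{eq:decomp} for $(\gg,\lev')$ refines, via the corresponding decomposition for $(\lev,\lev')$, the decomposition for $(\gg,\lev)$.

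Concretely, the main step is: every element of $\nil^-_S\ug + \ug\nil^+_S$ is killed by $pr_{\gg/\lev'}$, and every element of $\nil^-_{S',\lev}U(\lev) + U(\lev)\nil^+_{S',\lev}$ is killed by $pr_{\lev/\lev'}$. The first of these follows from Proposition \ref{prop:projprop}(2) applied to $\gg$ with the subset $S'$, once we observe $\nil^-_S \subseteq \nil^-_{S'}$ and $\nil^+_S \subseteq \nil^+_{S'}$ (because $R_S \supseteq R_{S'}$ forces $R^{\pm}\setminus R_S \subseteq R^{\pm}\setminus R_{S'}$), so $\nil^-_S\ug \subseteq \nil^-_{S'}\ug$ and $\ug\nil^+_S \subseteq \ug\nil^+_{S'}$. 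The second follows from Proposition \ref{prop:projprop}(2) applied now inside $\lev$ with its subset $S'$. Then, given any $u \in \ug$, write $u = u_- + u_0 + u_+$ along \eqref{eq:decomp} for $(\gg,\lev)$ with $u_0 = pr_{\gg/\lev}(u) \in U(\lev)$; since $pr_{\lev/\lev'}$ is applied to $U(\lev)$ only after $pr_{\gg/\lev}$, we get $pr_{\lev/\lev'}(pr_{\gg/\lev}(u)) = pr_{\lev/\lev'}(u_0)$. It remains to check $pr_{\gg/\lev'}(u) = pr_{\lev/\lev'}(u_0)$. For this I decompose $pr_{\gg/\lev'}(u) = pr_{\gg/\lev'}(u_-) + pr_{\gg/\lev'}(u_0) + pr_{\gg/\lev'}(u_+)$; the outer two terms vanish by the first bullet above (using $u_\pm \in \nil^\mp_S\ug + \ug\nil^\pm_S$... more precisely $u_- \in \nil^-_S\ug$ and $u_+ \in \ug\nil^+_S$, which are the images of the two halves of the first summand — here I need to be a touch careful since the first summand in \eqref{eq:decomp} is a sum, not a direct sum, but that does not matter since both halves are killed). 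So $pr_{\gg/\lev'}(u) = pr_{\gg/\lev'}(u_0)$ with $u_0 \in U(\lev)$.

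Thus the whole problem reduces to the claim that $pr_{\gg/\lev'}$ and $pr_{\lev/\lev'}$ agree on the subalgebra $U(\lev) \subseteq \ug$. This is where I would be most careful: decompose $u_0 \in U(\lev)$ along \eqref{eq:decomp} for the pair $(\lev,\lev')$ as $u_0 = w_- + w_0 + w_+$ with $w_0 = pr_{\lev/\lev'}(u_0) \in U(\lev')$, $w_- \in \nil^-_{S',\lev}U(\lev)$, $w_+ \in U(\lev)\nil^+_{S',\lev}$. Applying $pr_{\gg/\lev'}$: the term $w_0 \in U(\lev')$ is fixed since $pr_{\gg/\lev'}$ restricts to the identity on $U(\lev')$ (this is immediate from \eqref{eq:decomp} for $(\gg,\lev')$, as $U(\lev')$ is literally the second summand). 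The terms $w_\pm$ lie in $\nil^-_{S'}\ug$ and $\ug\nil^+_{S'}$ respectively — because $\nil^\pm_{S',\lev} \subseteq \nil^\pm_{S'}$ inside $\gg$, the root spaces $\gg_\alpha$ for $\alpha \in R_S^\pm \setminus R_{S'}$ being among those for $\alpha \in R^\pm \setminus R_{S'}$ — hence $pr_{\gg/\lev'}(w_\pm) = 0$ by Proposition \ref{prop:projprop}(2). Therefore $pr_{\gg/\lev'}(u_0) = w_0 = pr_{\lev/\lev'}(u_0)$, and combining with the previous paragraph, $pr_{\gg/\lev'}(u) = pr_{\lev/\lev'}(pr_{\gg/\lev}(u))$ for all $u$, which is \eqref{eq:composition}.

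The only genuine obstacle is the root-combinatorics bookkeeping — keeping straight the three inclusions $R_{S'} \subseteq R_S \subseteq R$ and which "$\nil$" sits inside which — and in particular verifying that $\lev'$ really is the Levi associated to $S' \subseteq B$ in $\gg$ (so that $pr_{\gg/\lev'}$ makes sense), which is exactly the hypothesis that $B_S \subseteq B$ and $S' \subseteq B_S$; without the assumption that $R_S$ has a basis contained in $B$ the intermediate object $\lev'$ need not be a standard Levi of $\gg$ and the statement would not even be well-posed. Once the inclusions $\nil^\pm_S \subseteq \nil^\pm_{S'}$ in $\gg$ and $\nil^\pm_{S',\lev} \subseteq \nil^\pm_{S'}$ in $\gg$ are nailed down, the proof is a formal two-line consequence of Proposition \ref{prop:projprop}(2) plus the observation that each projection restricts to the identity on its target subalgebra.
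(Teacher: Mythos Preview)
Your proof is correct and is precisely the unpacking of what the paper means by ``This follows immediately from the definitions'': you verify, via the root inclusions $R_{S'}\subseteq R_S$ and the resulting nestings $\nil^{\pm}_S\subseteq\nil^{\pm}_{S'}$ and $\nil^{\pm}_{S',\lev}\subseteq\nil^{\pm}_{S'}$, that the PBW complement for $(\gg,\lev')$ is exactly the sum of the PBW complement for $(\gg,\lev)$ and the image in $U(\lev)$ of the PBW complement for $(\lev,\lev')$. The paper omits all of these details, so your approach is the same one, just written out in full.
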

\begin{proof}
This follows immediately from the definitions.
\end{proof}
%
%%%%%%%%%%%%%%%%%%%%%%%%%%%%%%%%%%%%%%%%%%%%%%%%%%%%%%%%%%%%%
%
\section{Minimal polynomials and formal resolvents}\label{sec:minimal}
%
%%%%%%%%%%%%%%%%%%%%%%%%%%%%%%%%%%%%%%%%%%%%%%%%%%%%%%%%%%%%%
%
We begin with a very general notion of {minimal polynomial} for the
elements of an associative, but possibly noncommutative, algebra.
\begin{definition}\label{def:genminpol}
Let $R$ be an associative algebra over $\Comp$.
For any element $a\in R$, the non-zero monic polynomial 
$q\in\Comp[u]$ of minimal degree such that $q(a)=0$ is
called \emph {the minimal polynomial} of the element $a$.  
\end{definition}
It is clear that all polynomials $f\in\Comp[u]$ annihilating a fixed 
element $a\in R$ form a (possibly zero) ideal. Since the ring of 
polynomials in one variable over a field is a principal ideal domain, 
we conclude that $f(a)=0$ holds for a polynomial $f\in\Comp[u]$ 
if and only if $f$ is divisible by the minimal polynomial of $a$. 
In particular, if a minimal polynomial exists, it is unique.

This notion of minimal polynomial, which can be defined for 
associative algebras over any field, simultaneously generalizes minimal 
polynomials in field theory and minimal polynomials of matrices. 
\begin{definition}\label{def:resolvent}
Let $a$ be an element of an associative algebra $R$.
We call the formal power series 
\begin{equation}\label{eq:resolve}
\sft_a(u)=(u-a)^{-1}=u^{-1}\sum_{k=0}^{\infty}a^k u^{-k}\in R[[u^{-1}]]
\end{equation}
\emph{the formal resolvent} of $a$.
\end{definition}
There is a well known characterization of the minimal polynomial
in terms of the {formal resolvent}.
\begin{prop}\label{prop:min_pole}
Let $a\in R$ and $p\in \Comp[u]$ be a polynomial in one variable. 
Then $p(a)=0$ if and only if $p(u)\sft_a(u)$  is a polynomial in $u$
(a priori, it is only a Laurent formal power series in $u^{-1}$). 
Moreover, both these conditions are equivalent to the vanishing of
the coefficient of $u^{-1}$ in the expansion of $p(u)\sft_a(u)$. 
\end{prop}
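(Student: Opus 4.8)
The plan is to work directly with the formal power series identity and extract the needed coefficients. First I would write $p(u) = \sum_{j=0}^{m} c_j u^j$ and multiply by the formal resolvent using \eqref{eq:resolve}, namely $\sft_a(u) = \sum_{k\geq 0} a^k u^{-k-1}$, so that $p(u)\sft_a(u) = \sum_{j=0}^{m}\sum_{k\geq 0} c_j a^k u^{j-k-1}$. Collecting by powers of $u$, the coefficient of $u^{-1}$ is exactly $\sum_{j=0}^{m} c_j a^j = p(a)$. More generally, the coefficient of $u^{-\ell-1}$ for $\ell\geq 0$ is $\sum_{j=0}^{m} c_j a^{j+\ell} = a^\ell p(a)$, and the coefficient of $u^{j}$ for $0\le j\le m-1$ is a fixed element of $R$ (a partial sum of the $c_j a^k$ with $j-k-1 = $ that power) which is irrelevant to the equivalence.

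From this bookkeeping the three equivalences fall out almost immediately. If $p(a)=0$, then every coefficient of $u^{-\ell-1}$ (for $\ell\ge 0$) equals $a^\ell p(a) = 0$, so $p(u)\sft_a(u)$ has no negative powers of $u$ and is therefore a polynomial in $u$ (of degree at most $m-1$). Conversely, if $p(u)\sft_a(u)$ is a polynomial, then in particular its coefficient of $u^{-1}$ vanishes; and that coefficient is $p(a)$, so $p(a)=0$. This already gives all three implications in a cycle: "$p(a)=0$" $\Rightarrow$ "$p(u)\sft_a(u)$ is a polynomial" $\Rightarrow$ "coefficient of $u^{-1}$ vanishes" $\Rightarrow$ "$p(a)=0$". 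So the proof is essentially a one-line coefficient computation together with the trivial observation that a polynomial has no nonzero coefficients in negative degree.

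There is no serious obstacle here; the only point requiring a word of care is the manipulation of the series. Since $\sft_a(u)$ lies in $R[[u^{-1}]]\cdot u^{-1}$ and $p(u)$ is a genuine polynomial, the product $p(u)\sft_a(u)$ is a well-defined element of the ring $R((u^{-1}))$ of formal Laurent series in $u^{-1}$ (each coefficient is a finite sum), so rearranging the double sum and collecting coefficients of a fixed power of $u$ is legitimate. It is also worth recording explicitly the stronger fact produced by the computation, that the coefficient of $u^{-\ell-1}$ in $p(u)\sft_a(u)$ equals $a^{\ell}p(a)$, since it makes transparent why vanishing of the single coefficient of $u^{-1}$ already forces all the lower coefficients to vanish — one does not even need to invoke the identity $(u-a)\sft_a(u)=1$, though that gives an alternative slick route: multiplying $p(u)\sft_a(u)$ on the left by $(u-a)$ yields $p(u)$, so if $p(u)\sft_a(u)=g(u)$ is a polynomial then $(u-a)g(u)=p(u)$, and comparing this as an identity in $R[u]$ forces $p(a)=0$ by substituting (formally) or by reading off that $p(u)-p(a)$ is divisible by $u-a$ on the left while $p(u)=(u-a)g(u)$ is as well.

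I would present the direct coefficient argument as the main proof, since it is self-contained and simultaneously delivers the "moreover" clause, and perhaps remark on the resolvent-functional-equation shortcut. The write-up is short: set up the double sum, identify the coefficient of $u^{-\ell-1}$ as $a^\ell p(a)$, then read off the three equivalences.
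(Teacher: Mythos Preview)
Your proposal is correct and follows essentially the same approach as the paper: both arguments identify the coefficient of $u^{-k-1}$ in $p(u)\sft_a(u)$ as $p(a)a^k$ (equivalently $a^k p(a)$) and read off all three equivalences from that. The only cosmetic difference is that the paper first invokes the identity $p(u)-p(a)=(u-a)g(u)$ to write $p(u)\sft_a(u)=p(a)\sft_a(u)+g(u)$ and then reads the negative-degree coefficients from $p(a)\sft_a(u)$, whereas you extract the same coefficients directly from the double sum; the content is identical.
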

\begin{proof}
It is well known (and easy to check) that $(p(u)-p(v))(u-v)^{-1}$ is a 
polynomial in $u$ and $v$. Therefore, 
$p(u)\sft_a(u)=p(u)(u-a)^{-1}=p(a)(u-a)^{-1}+$ ({a polynomial in} $u$). 
From the expansion \eqref{eq:resolve} of $(u-a)^{-1}$ in powers 
of $u^{-1}$ we find that the coefficient of $u^{-k-1}$ in 
$p(u)\sft_a(u)$ is $p(a)a^k$. This proves the proposition.
\end{proof}
\begin{cor}
An element $a\in A$ admits a minimal polynomial if and only if
its formal resolvent $\sft_a(u)$ is a rational function of $u$
with denominator in $\Comp[u]$. 
\end{cor}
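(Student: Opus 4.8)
The plan is to reduce the corollary to the equivalence already proved in Proposition \ref{prop:min_pole}. First I would observe that by definition the element $a\in R$ (the statement writes $A$, but $R$ is meant) admits a minimal polynomial precisely when the ideal $\{f\in\Comp[u]: f(a)=0\}$ is nonzero, i.e.\ when some nonzero polynomial $p$ satisfies $p(a)=0$. By Proposition \ref{prop:min_pole}, $p(a)=0$ is equivalent to $p(u)\sft_a(u)$ being a polynomial in $u$, say $p(u)\sft_a(u)=g(u)$ with $g\in R[u]$. Rearranging gives $\sft_a(u)=g(u)/p(u)$, exhibiting the formal resolvent as a rational function of $u$ whose denominator $p$ lies in $\Comp[u]$ (scalar, not just in $R[u]$). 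This proves the ``only if'' direction.

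For the converse, suppose $\sft_a(u)$ is a rational function of $u$ with denominator in $\Comp[u]$, that is, $\sft_a(u)=g(u)/p(u)$ for some $g(u)\in R[u]$ and nonzero $p(u)\in\Comp[u]$; here I must be a little careful about what ``rational function'' means in the ring $R[[u^{-1}]]$, and I would make precise that it means there exist such $g$ and $p$ with $p(u)\sft_a(u)=g(u)$ as an identity in $R[[u^{-1}]]$ (equivalently in the Laurent series ring), where $g(u)$ happens to have only finitely many nonzero coefficients. Then $p(u)\sft_a(u)=g(u)$ is a polynomial in $u$, so again by Proposition \ref{prop:min_pole} we conclude $p(a)=0$; since $p\neq 0$, the ideal of annihilating polynomials is nonzero, hence (being an ideal in the PID $\Comp[u]$) is generated by a unique monic polynomial of minimal degree, which is the minimal polynomial of $a$. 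Thus $a$ admits a minimal polynomial.

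The only real subtlety—and the step I would be most careful about—is pinning down the definition of ``rational function with denominator in $\Comp[u]$'' so that both implications genuinely match the hypothesis of Proposition \ref{prop:min_pole}. The natural reading is: $\sft_a(u)\in R[[u^{-1}]]$ is said to be such a rational function if $p(u)\,\sft_a(u)\in R[u]$ for some nonzero $p\in\Comp[u]$. With this reading the corollary is an immediate restatement of Proposition \ref{prop:min_pole}, and no further computation is needed. If instead one wanted to allow the numerator to have coefficients in some localization of $R$, one would note that $p(u)\sft_a(u)=p(a)(u-a)^{-1}+(\text{polynomial})$ already forces, once it is a polynomial at all, the coefficient $p(a)$ of $u^{-1}$ to vanish, so the distinction is harmless. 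I would therefore present the proof as a one-line deduction from Proposition \ref{prop:min_pole}, after making the definitional point explicit.

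\begin{proof}
By Proposition \ref{prop:min_pole}, a nonzero polynomial $p\in\Comp[u]$ satisfies $p(a)=0$ if and only if $p(u)\sft_a(u)$ lies in $R[u]$. If $a$ has a minimal polynomial $p$, then $p(u)\sft_a(u)=g(u)\in R[u]$, whence $\sft_a(u)=g(u)/p(u)$ is a rational function of $u$ with denominator $p\in\Comp[u]$. Conversely, if $\sft_a(u)=g(u)/p(u)$ with $g\in R[u]$ and $0\neq p\in\Comp[u]$, then $p(u)\sft_a(u)=g(u)$ is a polynomial in $u$, so $p(a)=0$; thus the ideal of polynomials annihilating $a$ is nonzero, and, being an ideal in the principal ideal domain $\Comp[u]$, it is generated by a monic polynomial of minimal degree, which is by definition the minimal polynomial of $a$.
\end{proof}
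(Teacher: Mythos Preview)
Your proposal is correct and matches the paper's approach: the paper states this as a corollary with no proof, treating it as an immediate consequence of Proposition \ref{prop:min_pole}, which is exactly what you do. Your care in unpacking the meaning of ``rational function with denominator in $\Comp[u]$'' is appropriate and, if anything, more explicit than the paper itself.
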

We are particularly interested in the case where $R$ is the
algebra of $N\times N$ matrices over another algebra $A$.  
\begin{cor}
Suppose that $a$ is an $N\times N$ matrix over $A$.
Then the minimal polynomial of $a$, viewed as an element
of $R=\sfm_N(A)$, is the monic least common 
denominator in $\Comp[u]$ of the entries of the matrix $\sft_a(u)$.
\end{cor}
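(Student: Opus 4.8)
The plan is to apply Proposition~\ref{prop:min_pole} one matrix entry at a time. First I would make the identification $R[[u^{-1}]] = \sfm_N(A)[[u^{-1}]] \cong \sfm_N(A[[u^{-1}]])$ explicit, so that the formal resolvent $\sft_a(u) = (u-a)^{-1}$ is viewed as the $N\times N$ matrix whose $(i,j)$ entry $f_{ij}(u)$ lies in $u^{-1}A[[u^{-1}]]$. Since a polynomial $p\in\Comp[u]$ is central in $R[[u^{-1}]]$ and acts on a matrix entrywise, the product $p(u)\sft_a(u)$ lies in $R[u]=\sfm_N(A[u])$ if and only if $p(u)f_{ij}(u)\in A[u]$ for every pair $(i,j)$; by Proposition~\ref{prop:min_pole} this is in turn equivalent to $p(a)=0$.

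Next I would analyze each entry separately. For fixed $(i,j)$, the set $\I_{ij}=\{\,p\in\Comp[u] : p(u)f_{ij}(u)\in A[u]\,\}$ is closed under addition and under multiplication by arbitrary elements of $\Comp[u]$, hence is an ideal of the principal ideal domain $\Comp[u]$; write $\I_{ij}=(d_{ij})$ with $d_{ij}$ monic whenever $\I_{ij}\neq 0$. The polynomial $d_{ij}$ is precisely the denominator of $f_{ij}$ written in lowest terms, and a monic $p$ satisfies $p(u)f_{ij}(u)\in A[u]$ exactly when $d_{ij}\mid p$. From the previous paragraph, $\{\,p\in\Comp[u] : p(a)=0\,\}=\bigcap_{i,j}\I_{ij}$, and in $\Comp[u]$ this intersection is the ideal generated by $\operatorname{lcm}_{i,j} d_{ij}$, which exists as a monic polynomial precisely when every $\I_{ij}$ is nonzero, i.e. when every entry $f_{ij}$ of $\sft_a(u)$ is a rational function of $u$. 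By definition $\operatorname{lcm}_{i,j} d_{ij}$ is the monic least common denominator of those entries.

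Finally, combining this with Definition~\ref{def:genminpol}: the minimal polynomial of $a$ is the monic generator of the ideal $\{\,p : p(a)=0\,\}$, so it exists if and only if that ideal is nonzero, and in that case it equals $\operatorname{lcm}_{i,j} d_{ij}$, the monic least common denominator of the entries of $\sft_a(u)$, which is the claim (consistently with the preceding corollary, since a minimal polynomial exists iff all entries of $\sft_a(u)$ are rational). I do not expect a real obstacle here; the two places that need a little care are the identification of the matrix ring with the ring of matrices over the formal power series ring, which is what licenses checking polynomiality entry by entry, and the bookkeeping of the degenerate case, where one must match the nonexistence of a minimal polynomial with the absence of a common denominator.
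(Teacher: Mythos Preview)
Your argument is correct and is exactly the natural unpacking of the corollary: apply Proposition~\ref{prop:min_pole} in the matrix algebra $R=\sfm_N(A)$, use the identification $\sfm_N(A)[[u^{-1}]]\cong\sfm_N(A[[u^{-1}]])$ to check polynomiality entrywise, and then take the lcm of the resulting denominator ideals in the PID $\Comp[u]$. The paper states this corollary without proof, so your write-up simply makes explicit what the paper leaves to the reader; there is no difference in approach.
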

\begin{remark}
We may interpret the proposition as saying that the multiset of 
the roots of the minimal polynomial of $a$ coincides with the multiset 
of the poles of the formal resolvent of $a$,
where both the roots and the poles are considered with their multiplicities.
This formulation has important applications to
functional calculus in Banach algebras. 
\end{remark}
The following construction is due to Oshima in \cite{Oshima_min}, 
cf. \cite{Gould_characteristic, Oda_Oshima}. Fix a
Lie algebra $\gg$, an $N$-dimensional $\gg$-module $\pi$ 
with a basis $\{v_1,\ldots,v_N\}$, 
and a $\gg$-equivariant linear map $p:\End\pi\to\ug$. 
Then the elements $p(\sfe_{ij}), 1\leq i,j\leq N$
(where $\sfe_{ij}$ are the ordinary matrix units)
can be arranged into an $N\times N$ matrix $\sff_{\pi}\in\sfm_N(\ug)$.
In this paper $\gg$ is a reductive Lie algebra with a 
fixed nondegenerate invariant symmetric bilinear form and 
$\pi$ is a completely reducible module, so that 
there is a canonical such map $p$. Let $V$ be a $\gg$-module and
denote by $\sff_{\pi,V}$ the image of the matrix $\sff_{\pi}$ 
in the associative algebra $\sfm_N(\End V)$
induced by the natural homomorphism $\ug\to\End V$.
\begin{definition}\label{def:minpol}
With the assumptions above, the \emph{minimal polynomial} $q_{\pi,V}$ 
is the minimal polynomial of the matrix $\sff_{\pi,V}$.
\end{definition}
Gould proved in \cite{Gould_characteristic} 
that if $\gg$ is a semisimple Lie algebra then there exists
``universal'' polynomial with coefficients in $\zg$ annihilating
$\sff_{\pi,V}$. It follows that if a module $V$ has an infinitesimal 
character then it admits a minimal polynomial in the sense
of Definition \ref{def:genminpol}, i.e. with coefficients in $\Comp$.

Our approach to computing the minimal polynomial of a simple
highest weight module $L(\lam)$ is based on the combination
of the criterion of Proposition \ref{prop:criterion}, which
allows one to determine whether the $ad$-invariant 
subspace of $\ug$ spanned by the entries of the matrix
$f(\sff_{\pi})$ annihilates $L(\lam)$ by considering the
Harish-Chandra projection, and of the relation between
the minimal polynomial and the poles of the formal resolvent
of a matrix given in Proposition \ref{prop:min_pole}.  
Let us summarize these remarks in a proposition.
\begin{prop}\label{prop:technique}
Fix a weight $\lam\in\card$.
Denote by $pr:\ug\to\Comp$ the composition of the Harish-Chandra
projection map $pr_{\gg/\car}$ and the evaluation at $\lam$. 
Then the roots of the minimal polynomial of $L(\lam)$ coincide
with the poles of the Laurent formal power series 
$pr \sft_{\sff_{\pi}}(u)\in\Comp((u^{-1}))$, preserving the
multiplicities. Moreover, a polynomial $f(u)$ is divisible by 
the minimal polynomial of $L(\lam)$ 
if and only if the coefficient of $u^{-1}$ in the
formal power series expansion 
$f(u)pr \sft_{\sff_{\pi}}(u)\in\Comp((u^{-1}))$
is $0$. In this case, the expansion reduces to a polynomial in $u$. 
\end{prop}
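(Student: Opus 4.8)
The plan is to deduce Proposition \ref{prop:technique} from Proposition \ref{prop:criterion} together with the characterization of the minimal polynomial in terms of the formal resolvent (Proposition \ref{prop:min_pole}), working one polynomial coefficient at a time. The first step is to observe that for a fixed polynomial $f(u)=\sum_k c_k u^k$, the matrix $f(\sff_\pi)\in\sfm_N(\ug)$ has entries that are explicit $\ug$-linear combinations of products of the $p(\sfe_{ij})$; since $p$ is $\gg$-equivariant and $\pi\simeq\End\pi$ is completely reducible, the span $P_f\subseteq\ug$ of all $N^2$ entries of $f(\sff_\pi)$ is an $ad(\gg)$-invariant subspace. By Definition \ref{def:minpol}, the minimal polynomial $q_{\pi,V}$ of $L(\lam)$ is the monic polynomial of least degree for which $f(\sff_{\pi,L(\lam)})=0$, i.e.\ for which $P_f$ annihilates $L(\lam)$; and more generally $f$ is divisible by $q_{\pi,L(\lam)}$ if and only if $P_f$ annihilates $L(\lam)$.

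Next I would feed this into Proposition \ref{prop:criterion}: $P_f$ annihilates $L(\lam)$ if and only if $pr_{\gg/\car}(p)$ vanishes at $\lam$ for every $p\in P_f$, equivalently, applying $pr$ entrywise, if and only if the matrix $pr(f(\sff_\pi))\in\sfm_N(\Comp)$ is the zero matrix. Now the point is to commute $pr$ past $f$. The map $pr=ev_\lam\circ pr_{\gg/\car}$ is not an algebra homomorphism on all of $\ug$, so one cannot naively write $pr(f(\sff_\pi))=f(pr(\sff_\pi))$; but one does not need to. Instead I would work with the formal resolvent directly. Form $\sft_{\sff_\pi}(u)=(u-\sff_\pi)^{-1}=u^{-1}\sum_{k\ge0}\sff_\pi^k u^{-k}\in\sfm_N(\ug)[[u^{-1}]]$ and apply $pr$ entrywise to get $pr\,\sft_{\sff_\pi}(u)\in\sfm_N(\Comp)((u^{-1}))$. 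By Proposition \ref{prop:min_pole} applied in $R=\sfm_N(\ug)$ (and its matrix corollary), $f(u)\sft_{\sff_\pi}(u)$ is a polynomial in $u$ with matrix coefficients equal to $f(\sff_\pi)\sff_\pi^k$ in the $u^{-k-1}$ slot, so $f$ is divisible by the minimal polynomial of the matrix $\sff_\pi$ over $\ug$ exactly when the $u^{-1}$-coefficient, namely $f(\sff_\pi)$, vanishes. Applying the linear map $pr$ to the identity $f(u)\sft_{\sff_\pi}(u)=f(\sff_\pi)(u-\sff_\pi)^{-1}+(\text{polynomial in }u)$ and reading off coefficients shows that the $u^{-k-1}$-coefficient of the scalar series $f(u)\,pr\,\sft_{\sff_\pi}(u)$ is $pr(f(\sff_\pi)\sff_\pi^k)$, which is the entrywise $pr$-image of an entry of $P_{\tilde f}$ for $\tilde f(u)=u^k f(u)$. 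Since $\tilde f$ is divisible by $q_{\pi,L(\lam)}$ whenever $f$ is, Proposition \ref{prop:criterion} gives that all these coefficients vanish precisely when $f$ is divisible by the minimal polynomial of $L(\lam)$ — and in particular, by the last clause of Proposition \ref{prop:min_pole}, it suffices to check the single coefficient of $u^{-1}$, i.e.\ that $pr(f(\sff_\pi))$ is the zero scalar matrix.

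The statement about the roots then follows by the same multiset-of-poles argument used in the Remark after Proposition \ref{prop:min_pole}: since $f$ is divisible by $q_{\pi,L(\lam)}$ iff $f(u)\,pr\,\sft_{\sff_\pi}(u)$ is a polynomial, the least common denominator in $\Comp[u]$ of the entries of $pr\,\sft_{\sff_\pi}(u)$ is exactly $q_{\pi,L(\lam)}$, whence the poles of the scalar Laurent series $pr\,\sft_{\sff_\pi}(u)$ (which, being scalar, has a single least common denominator) coincide with the roots of $q_{\pi,L(\lam)}$, multiplicities included. I should note that when $L(\lam)$ has an infinitesimal character the minimal polynomial is genuinely a scalar polynomial (this was recorded after Definition \ref{def:minpol}, citing Gould), so the statement is not vacuous; if one wishes to avoid even that hypothesis one can phrase everything in terms of divisibility, which is what the argument actually proves.

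The main obstacle I anticipate is bookkeeping rather than conceptual: one must be careful that $pr$ is only $\Comp$-linear, not multiplicative, so every manipulation has to be organized so that $pr$ is applied to a single element of $\ug$ (an entry of some $f(\sff_\pi)\sff_\pi^k$) at the end, never interleaved with multiplication; and one must check that the subspace $P_f$ (or rather each $P_{u^k f}$) is genuinely $ad(\gg)$-invariant, which is where the $\gg$-equivariance of $p$ and the identity $ad(\gg)(\sff_\pi)_{ij}$-stability of matrix-entry spans gets used. Both are routine once set up correctly, and the heart of the argument is simply the juxtaposition of Propositions \ref{prop:criterion} and \ref{prop:min_pole}.
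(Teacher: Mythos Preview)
Your argument is correct and is precisely the approach the paper intends: the proposition is stated there as a summary of the preceding discussion, and your proof is nothing more than the straightforward combination of Proposition~\ref{prop:criterion} (applied to the $ad(\gg)$-invariant span of the entries of $f(\sff_\pi)$) with Proposition~\ref{prop:min_pole}. A couple of minor infelicities---the phrase ``$\pi\simeq\End\pi$'' is a slip, and the passing reference to ``the minimal polynomial of $\sff_\pi$ over $\ug$'' is misleading since no such polynomial need exist---do not affect the substance; the key step, that the $u^{-k-1}$-coefficient of $f(u)\,pr\,\sft_{\sff_\pi}(u)$ equals $pr\bigl((u^k f)(\sff_\pi)\bigr)$ and hence vanishes by Proposition~\ref{prop:criterion} whenever $q_{\pi,L(\lam)}\mid f$, is exactly right.
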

We now have in our possession a convenient technique
of computing minimal polynomials of simple highest weight modules.
However, an important obstacle remains:  
the Harish-Chandra projection map $pr_{\gg/\car}$ is notoriously difficult
to compute explicitly. (This reflects upon the
complexity of the structure of the primitive spectrum). 
At this juncture, we specialize to 
$\pi=\Comp^N$, the defining module for $\gg$, for 
a symplectic or orthogonal Lie algebra $\gg$ 
and the contragredient ${\Comp^N}^*$ for
a general linear algebra $\gg=\glN$.
In these cases we are able to employ the relative Harish-Chandra projections
arising from the canonical chain of reductive Lie subalgebras
of $\gg$ of decreasing rank (as in the Gelfand--Tsetlin construction
and its analogues for symplectic and orthogonal Lie algebras)
and compute the absolute projection in an inductive fashion using the 
composition formula \eqref{eq:composition}.
\begin{remark}
The arguments just given let one hope that similar computations
can be carried out for any weight multiplicity-free module $\pi$.
Such modules exist for semisimple Lie algebras of all types except
$F_4$ and $E_8$. For the case of a general linear algebra,
we have successfully applied this technique to determination of 
the \emph{quantized elementary divisors}, see Section \ref{sec:primitive}
and \cite{Protsak_divisor}.
\end{remark}
%
%%%%%%%%%%%%%%%%%%%%%%%%%%%%%%%%%%%%%%%%%%%%%%%%%%%%%%%%%%%%%
%
\section{Minimal polynomials: general linear case}
%
%%%%%%%%%%%%%%%%%%%%%%%%%%%%%%%%%%%%%%%%%%%%%%%%%%%%%%%%%%%%%
%
Let $\gg=\gln$ be the Lie algebra of $n\times n$ matrices.
The algebra $\gg$ acts on its defining module $\Comp^n$,
an $n$-dimensional complex vector space with a basis $\{v_1,\ldots,v_n\}$.
A basis of $\gln$ is given by the matrix units $\sfe_{ij}, 1\leq i,j\leq n$.
They satisfy the following commutation relation:
\begin{equation}\label{eq:sfe_comm}
[\sfe_{ij},\sfe_{kl}]=\del_{jk}\sfe_{il}-\del_{li}\sfe_{kj}.
\end{equation}
We arrange the generators $\{\sfe_{ij}\}$ into a single 
$n\times n$ matrix $\sfe\in\sfm_N(\ugn)$:
\begin{equation}\label{eq:sfe}
\sfe=\begin{bmatrix}
\sfe_{11} & \sfe_{12} & \ldots & \sfe_{1n} \\
\sfe_{21} & \sfe_{22} & \ldots & \sfe_{2n} \\
\ldots & \ldots & \ldots & \ldots \\
\sfe_{n1} & \sfe_{n2} & \ldots & \sfe_{nn} 
\end{bmatrix}\in\mat_n(\ugn).
\end{equation}
Choose a triangular decomposition $\gg=\nil^{+}\oplus\car\oplus\nil^{-}$, 
where $\nil^{+}$ is spanned by $\{\sfe_{ij}: i<j\}$, $\nil^{-}$ is spanned
by $\{\sfe_{ij}: i>j\}$ and $\car$ is spanned by $\{\sfe_{ii}\}$. 
The elements $\sfh_1=\sfe_{11}, \ldots,\sfh_n=\sfe_{nn}$ form a basis
of $\car$, and we identify the weights $\lam\in\card$ with the $n$-tuples 
$(\lam_1,\ldots, \lam_n)$ of their values on the elements of this basis:
\begin{equation}
\lam_i=\lam(\sfh_i)=\lam(\sff_{ii}).
\end{equation}
We also set 
\begin{equation}
\rho=(n-1,\ldots,0), {\ } 
l=\lam+\rho=(\lam_1+\rho_1,\ldots, l_n+\rho_n), \quad 
\textrm{where } l_i=\lam_i+n-i.
\end{equation}
Denote by $\sft(u)$ the formal resolvent of $\sfe$ (Definition 
\ref{def:resolvent}), viewed as an element of the algebra of 
$n\times n$ matrices over $\ugn$. Then
\begin{equation}\label{eq:res_gl}
\sft(u)=(uI_n-\sfe)^{-1}=u^{-1}+\sum_{k\geq 1}\sfe^k u^{-k-1}
\in\sfm_n(\ugn)[[u^{-1}]].
\end{equation}
\begin{prop}\label{prop:comrel_gl}
The entries of the matrix $\sft(u)$ transform under
the adjoint action of $\gln$ in the same way 
as the entries of the matrix $\sfe$. More explicitly,
\begin{equation}\label{eq:sft_gl}
[\sfe_{ij},\sft_{kl}(u)]=\del_{jk}\sft_{il}(u)-\del_{li}\sft_{kj}(u).
\end{equation}
\end{prop}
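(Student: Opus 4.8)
The plan is to prove the commutation relation \eqref{eq:sft_gl} by showing that the map $\ad(\sfe_{ij})$ acts on the matrix $\sft(u)$ exactly as it does on the matrix $\sfe$, and then extract the formula. First I would recall that \eqref{eq:sfe_comm}, rewritten in matrix form, says precisely that $[\sfe_{ij},\sfe] = \sfe_{jk}^{\mathrm{row}}$-type expression; more usefully, I would verify the operator identity $[\sfe_{ij}, \sfe^k] = \sum_{\text{over slots}} \sfe^{a}(\ad\text{-action})\sfe^{b}$ and show inductively that it collapses to $\del_{jk}(\sfe^k)_{il} - \del_{li}(\sfe^k)_{kj}$ for each entry $(\sfe^k)_{kl}$. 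The cleanest way to package this is to note that for fixed $i,j$ the derivation $\ad(\sfe_{ij})$ acts on the whole matrix algebra $\sfm_n(\ugn)$, and from \eqref{eq:sfe_comm} one gets the matrix identity $[\sfe_{ij}, \sfe] = \sfe^{(j)}_{\to i} - \sfe^{(i)}_{\downarrow j}$, where the first term has the $j$-th row of $\sfe$ placed in row $i$ (zero elsewhere) and the second has the $i$-th column of $\sfe$ placed in column $j$ (zero elsewhere). Equivalently, $[\sfe_{ij},\sfe] = \sfe_{ij}^{\flat}\sfe - \sfe\,\sfe_{ij}^{\flat}$ where $\sfe_{ij}^{\flat}$ denotes the \emph{scalar} matrix unit in $\sfm_n(\Comp)$ — i.e. $\ad(\sfe_{ij})$ acts on $\sfm_n(\ugn)$ as the commutator with the constant matrix $\sfe_{ij}^{\flat}$ (acting only on the matrix indices, not on the $\ugn$-entries).

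Granting that, the key step is that since $\ad(\sfe_{ij})$ commutes with multiplication by the scalar $u$ and is a derivation of $\sfm_n(\ugn)$, it extends termwise to the formal power series ring $\sfm_n(\ugn)[[u^{-1}]]$, and applying it to $\sft(u) = (uI_n - \sfe)^{-1}$ gives
\begin{equation*}
[\sfe_{ij}, \sft(u)] = \sft(u)\,[\sfe_{ij}, uI_n - \sfe]\,\sft(u)^{\text{(formal)}} = \sft(u)\,[\sfe_{ij},\sfe]\,\sft(u),
\end{equation*}
using the standard "derivative of the inverse" identity valid in $\sfm_n(\ugn)[[u^{-1}]]$ because $uI_n - \sfe$ is invertible there. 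Substituting $[\sfe_{ij},\sfe] = \sfe_{ij}^{\flat}\sfe - \sfe\,\sfe_{ij}^{\flat}$ and then using $\sfe = uI_n - (uI_n - \sfe)$ to rewrite $\sfe = \sft(u)^{-1}$-adjacent factors, the right side telescopes: $\sft(u)\sfe_{ij}^{\flat}\sfe\,\sft(u) - \sft(u)\sfe\,\sfe_{ij}^{\flat}\sft(u) = \sft(u)\sfe_{ij}^{\flat} - \sfe_{ij}^{\flat}\sft(u)$, i.e. $[\sfe_{ij},\sft(u)] = [\sfe_{ij}^{\flat}, \sft(u)]$ as matrices — which is precisely \eqref{eq:sft_gl} read off entrywise, since $[\sfe_{ij}^{\flat}, \sft(u)]_{kl} = \del_{jk}\sft_{il}(u) - \del_{li}\sft_{kj}(u)$.

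Alternatively, and perhaps more transparently for the reader, one can avoid the formal manipulation of inverses and argue directly by induction on $k$ that $[\sfe_{ij}, (\sfe^k)_{kl}] = \del_{jk}(\sfe^k)_{il} - \del_{li}(\sfe^k)_{kj}$: the base case $k=0$ is trivial ($\sfe^0 = I_n$ and both sides vanish appropriately, with the convention that $\del_{jk}\del_{il} - \del_{li}\del_{kj}$ terms from $I_n$ cancel), the case $k=1$ is \eqref{eq:sfe_comm}, and the inductive step uses $(\sfe^{k+1})_{kl} = \sum_m (\sfe^k)_{km}\sfe_{ml}$ together with the Leibniz rule for $\ad(\sfe_{ij})$ and \eqref{eq:sfe_comm}; the cross terms $\sum_m \del_{jm}(\sfe^k)_{im}\sfe_{ml}$-type sums collapse by the Kronecker deltas and the telescoping of $\del_{mi}$-indexed terms between the two summands. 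Then multiply by $u^{-k-1}$ and sum over $k\geq 0$.

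The main obstacle is purely bookkeeping: in the telescoping (either version) one must track four Kronecker-delta-weighted terms and see that two of them cancel across consecutive powers of $\sfe$, so a careful choice of notation for the "constant matrix unit acting on matrix indices only" is what makes the argument clean rather than messy. No representation-theoretic input is needed beyond the PBW-level fact that $\sfm_n(\ugn)[[u^{-1}]]$ is an associative algebra in which $uI_n - \sfe$ is a unit; everything else is the observation that the adjoint action of $\gln$ on $\sfm_n(\ugn)$ is "the adjoint action on $\ugn$-entries plus conjugation on the matrix indices," and the matrix-index part is exactly what produces the right-hand side of \eqref{eq:sft_gl}.
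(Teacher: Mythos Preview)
Your approach is essentially the paper's own: apply the standard identity $[x,X^{-1}]=-X^{-1}[x,X]X^{-1}$ with $X=uI_n-\sfe$ and $x=\sfe_{ij}$, then substitute \eqref{eq:sfe_comm}; the paper even mentions your ``more conceptual'' interpretation via the $GL_n$-action on matrix entries. Two minor slips to fix, though neither is a real gap. First, your displayed line drops the minus sign in the derivative-of-inverse formula and simultaneously drops the sign in $[\sfe_{ij},uI_n-\sfe]=-[\sfe_{ij},\sfe]$; the two errors cancel, so the endpoint $\sft(u)[\sfe_{ij},\sfe]\sft(u)$ is correct. Second, the scalar matrix unit realizing the entrywise adjoint action is $\sfe_{ji}^{\flat}$, not $\sfe_{ij}^{\flat}$: from \eqref{eq:sfe_comm} one has $[\sfe_{ij},\sfe]_{kl}=\del_{jk}\sfe_{il}-\del_{li}\sfe_{kj}$, whereas $[\sfe_{ij}^{\flat},\sfe]_{kl}=\del_{ik}\sfe_{jl}-\del_{jl}\sfe_{ki}$. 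With $\sfe_{ji}^{\flat}$ in place of $\sfe_{ij}^{\flat}$, your telescoping yields $[\sfe_{ji}^{\flat},\sft(u)]$, whose $(k,l)$-entry is indeed $\del_{jk}\sft_{il}(u)-\del_{li}\sft_{kj}(u)$. The alternative induction on powers of $\sfe$ is also fine.
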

\begin{proof}
This follows from the standard commutator identity
\begin{equation}
[x,(X^{-1})_{kl}]=-\sum_{p,q}(X^{-1})_{kp}[x,X_{pq}](X^{-1})_{ql},
\end{equation}
where $X$ is an invertible $n\times n$ matrix with entries in an
associative algebra, $x$ is an element of the same algebra,
and the indices $l,m,p,q$ are between $1$ and $n$.
We take $x=\sfe_{ij}$, $X=uI_n-\sfe$, and use the commutation relations
\eqref{eq:sfe_comm}. A more conceptual proof can be given using the 
adjoint action of the group $GL_n$ on the linear span of the matrix entries 
of the $n\times n$ matrix $uI_n-\sfe$ and of its inverse $\sft(u)$.
\end{proof}
Our goal is to determine the minimal polynomial of an arbitrary simple
highest weight $\gln$-module following the strategy outlined in the 
previous section. We begin by computing some relative Harish-Chandra 
projections with respect to the maximal proper 
standard parabolic subalgebra with the Levi
factor $\gl_{n-1}\oplus\gl_1$.

Let $S$ be the complement of the 
last simple root in the root system $B$ of $\gln$ (in the usual enumeration). 
The parabolic subalgebra $\pp_S=\nil^{+}_{S}\oplus\lev_S$ 
is the $(n-1)$st standard 
maximal parabolic of $\gln$ and consists of the matrices stabilizing 
the codimension one subspace of $\Comp^n$ spanned by the first
$n-1$ standard basis vectors. We have a triangular decomposition
$\gg=\nil^{+}_{S}\oplus\lev_S\oplus\nil^{-}_{S}$, where  
the Levi factor
$\lev_S=\gl_{n-1}\oplus\gl_1$ is spanned by $\sfe_{ij}$
($1\leq i,j\leq n-1$) together with  $\sfe_{nn}$, the upper nilradical
$\nil^{+}_{S}$ is spanned by
$\sfe_{i,n}$ ($1\leq i\leq n-1$) and the lower nilradical $\nil^{-}_{S}$ 
is spanned by  $\sfe_{n,j}$ ($1\leq j\leq n-1$). 

Denote by $\sfe'$ the upper left corner $(n-1)\times(n-1)$ submatrix of 
$\sfe$ formed by the elements $\sfe_{ij}$ 
with $1\leq i,j\leq n-1$ and by $\sft'(u)$ its formal resolvent, 
cf. \eqref{eq:res_gl}. 
Let us write $\lam=(\lam',\lam_n)$,
so that $\lam'_p=\lam_p$ for $1\leq p\leq n-1$.

Suppose that a weight $\lam\in\card$ has been fixed.
Let $pr:\ugn\to\Comp$ be the composition of the absolute 
Harish-Chandra projection $pr_{\gg/\car}$ and the evaluation homomorphism
$ev_\lam: U(\car)\to\Comp, \sfh_i\mapsto\lam_i$. Let the map  
$pr':U(\gl_{n-1})\to\Comp$ have a similar meaning with 
$\gl_{n-1}$ in place of $\gln$. Consider the ``relative'' map
$pr_n:\ugn\to U(\gl_{n-1})$, which is the composition 
of the relative Harish-Chandra projection 
$pr_{\gg/\lev_S}:\ugn\to U(\gl_{n-1})\otimes U(\gl_1)$ and
the partial evaluation homomorphism $\sfh_n\mapsto\lam_n$. 
By Proposition \ref{prop:composition}, these
maps are related by the composition formula $pr=pr'\circ pr_n$

\begin{prop}\label{prop:pr_gl}
The following formulas describe the effect of the 
relative projection map $pr_n$ on the matrix series $\sft(u)$:
\begin{equation}\label{eq:rel_gl}
pr_n\sft_{nn}(u)=\frac{1}{u-\lam_n}, \quad
pr_n\sft_{ij}(u)=(1-\frac{1}{u-\lam_n})\sft'_{ij}(u-1), 1\leq i,j\leq n-1.
\end{equation}
\end{prop}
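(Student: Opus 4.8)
The plan is to compute the action of $pr_n$ on each entry $\sft_{kl}(u)$ directly from the definition of $pr_n$ as a relative Harish-Chandra projection followed by partial evaluation, using the block structure of $\sfe$ with respect to the parabolic $\pp_S$ with Levi $\lev_S = \gl_{n-1}\oplus\gl_1$. The first step is to write $uI_n-\sfe$ in block form, with the $(n-1)\times(n-1)$ block $uI_{n-1}-\sfe'$, the column $-(\sfe_{1n},\dots,\sfe_{n-1,n})^t$ lying in $\nil^+_S$, the row $-(\sfe_{n1},\dots,\sfe_{n,n-1})$ lying in $\nil^-_S$, and the corner $u-\sfe_{nn}$. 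The key algebraic input is the Schur-complement (block inversion) formula for the inverse of a $2\times2$ block matrix over a noncommutative ring: applied to $X=uI_n-\sfe$ it expresses every entry of $\sft(u)=X^{-1}$ as a (formal in $u^{-1}$) expression built from $(uI_{n-1}-\sfe')^{-1}$, the nilradical entries, and $(u-\sfe_{nn})^{-1}$, together with the Schur complement $u-\sfe_{nn}-\sum_{i,j} \sfe_{ni}\,\sft'_{ij}(u)\,\sfe_{jn}$.

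Next I would apply Proposition~\ref{prop:projprop}: the relative projection kills anything lying in $\nil^-_S\ug$ or in $\ug\nil^+_S$. Every term in the Schur-complement expansion of $\sft_{kl}(u)$ that involves at least one factor from $\nil^-_S$ (a $\sfe_{nj}$) on the left or from $\nil^+_S$ (a $\sfe_{in}$) on the right is therefore annihilated by $pr_{\gg/\lev_S}$ -- after first moving, where necessary, the nilradical factors to the outside using the commutation relations \eqref{eq:sfe_comm}, noting that the correction terms produced by these commutators again land in the correct two-sided ideals or contribute to lower order. What survives is: for the corner entry, only the geometric series in $(u-\sfe_{nn})^{-1}$, which projects to $1/(u-\lam_n)$ after the partial evaluation $\sfh_n\mapsto\lam_n$; for $1\le i,j\le n-1$, the leading term $\sft'_{ij}(u)$ together with exactly one ``loop'' correction of the form $\sum \sft'_{ik}(u)\,\sfe_{kn}\,(\text{scalar in }\sfe_{nn})\,\sfe_{ml}\,\sft'_{mj}(u)$ coming from the Schur complement's second-order part, all higher loops being forced into the ideals.

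The heart of the computation is then to evaluate this one surviving loop term. Here I would commute $\sfe_{kn}$ past the intervening factors to bring the pair $\sfe_{kn}\,\sfe_{nj}$ adjacent, producing $\sfe_{kn}\sfe_{nj} = \sfe_{kj}(\text{after a }\del\text{-correction})$ plus terms still in the ideals; the diagonal contribution $\sum_k \sfe_{kn}\sfe_{nk}$ interacts with $(u-\sfe_{nn})^{-1}$ and, upon partial evaluation at $\sfh_n\mapsto\lam_n$ and absolute projection in the $\gl_{n-1}$-variables, contributes the geometric factor $\frac{1}{u-\lam_n}$; the net effect is a shift $u\mapsto u-1$ in the $\gl_{n-1}$ resolvent and an overall factor $\bigl(1-\frac{1}{u-\lam_n}\bigr)$, matching \eqref{eq:rel_gl}. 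The shift by $1$ is the subtle point and I expect it to be the main obstacle: it arises because commuting a single $\sfe_{kn}$ across the string of $\sfe'$'s inside one factor of $\sft'(u)=(uI_{n-1}-\sfe')^{-1}$ effectively conjugates $\sfe'$ by an off-diagonal translation, shifting each $\sfe_{ii}$ ($i\le n-1$) by $1$ relative to the spectral parameter; bookkeeping this uniformly across all powers in the formal series \eqref{eq:res_gl}, rather than term by term, is where care is required. A cleaner route to the same conclusion, which I would use to cross-check, is to invoke Proposition~\ref{prop:comrel_gl}: the matrix $pr_n\sft(u)$ must transform under $\gl_{n-1}$ exactly as $\sfe'$ does, so its $(n-1)\times(n-1)$ block is necessarily a scalar multiple (scalar in $u$, with coefficients central in the relevant sense) of a resolvent of $\sfe'$ with a shifted argument, reducing the problem to pinning down the scalar and the shift by comparing the $u^{-1}$ and $u^{-2}$ coefficients, i.e.\ the images of $I_{n-1}$ and of $\sfe'$ themselves.
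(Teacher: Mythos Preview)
Your block-inversion/Schur-complement route is different from the paper's and substantially heavier. The paper never inverts the block matrix: it applies $pr_n$ directly to the scalar identities $\sum_k \sft_{ik}(u)(u\del_{kj}-\sfe_{kj})=\del_{ij}$ coming from $\sft(u)(uI_n-\sfe)=I_n$. For $i=j=n$ every term with $k<n$ ends in $\sfe_{kn}\in\nil^+_S$ and dies, giving the first formula immediately. For $1\le i,j\le n-1$ the only problematic term is $-\sft_{in}(u)\sfe_{nj}$; a single commutator from \eqref{eq:sft_gl} rewrites it as $-\sfe_{nj}\sft_{in}(u)+\del_{ij}\sft_{nn}(u)-\sft_{ij}(u)$, the first piece lies in $\nil^-_S\ug$, and the stray $-\sft_{ij}(u)$ drops the coefficient of $\del_{pj}$ from $u$ to $u-1$. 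This yields the linear matrix equation $(pr_n\sft(u))'\big((u-1)I_{n-1}-\sfe'\big)=(1-\tfrac{1}{u-\lam_n})I_{n-1}$, which is solved by multiplying by $\sft'(u-1)$. The shift by $1$, which you flag as the main obstacle, thus falls out of one commutator rather than a loop resummation.

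There is also a genuine gap in your ``cleaner route''. Equivariance of $pr_n$ under $\ad(\gl_{n-1})$ only places the $(n-1)\times(n-1)$ block of $pr_n\sft(u)$ in the $\End\Comp^{n-1}$-isotypic part of $U(\gl_{n-1})[[u^{-1}]]$, i.e.\ the $\Comp[[u^{-1}]]$-span of all powers $(\sfe')^r$; it does \emph{not} force the answer to be a scalar multiple of a single shifted resolvent $\sft'(u-c)$, so matching two Taylor coefficients cannot recover the formula. Your primary route has a related soft spot: in each Schur-complement factor $\sfe_{kn}(u-\sfe_{nn})^{-1}\sfe_{nl}$ the $\nil^+_S$-element sits on the left and the $\nil^-_S$-element on the right, so neither clause of Proposition~\ref{prop:projprop}(2) applies until you commute through the adjacent $\sft'(u)$-factors; those commutators regenerate terms of the same shape rather than terminating after ``one loop''. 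Resumming that recursion is precisely what the paper's one-line linear equation accomplishes.
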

\begin{proof}
Since the matrices $\sft(u)$ and $uI_n-\sfe$ are mutually inverse, we have: 
\begin{equation} \label{eq:Tinv}
\sum_{k=1}^{n}\sft_{ik}(u)(u\del_{kj}-\sfe_{kj})=\del_{ij}.
\end{equation}
First, let $i=j=n$. Splitting the sum into two parts, corresponding
to whether $1\leq k\leq n-1$ or $k=n$ and applying 
$pr_n$ to both sides, we get:
\begin{equation*}
-\sum_{p=1}^{n-1}pr_n (\sft_{ip}(u)\sfe_{pn})+
pr_n(\sft_{nn}(u)(u-\sfe_{nn}))=1. 
\end{equation*}
Note that since $\sfe_{pn}\in\nil^{+}_S$,
all terms in the first sum in the left hand side vanish, 
and taking into account that 
$\sfe_{nn}\in\lev_S$ and $pr_n\sfe_{nn}=\lam_n$, 
we obtain $pr_n\sft_{nn}(u)(u-\lam_n)=1$, 
proving the first formula.

Now suppose that $1\leq i,j \leq n-1$. Splitting the sum into two
parts as before and applying $pr_n$, we get:
\begin{equation*}
\sum_{p=1}^{n-1}pr_n (\sft_{ip}(u)(u\del_{pj}-\sfe_{pj}))-
pr_n(\sft_{in}(u)\sfe_{nj}))=\del_{ij}. 
\end{equation*}
Using the commutation relations \eqref{eq:sft_gl}, we may replace
$-\sft_{in}(u)\sfe_{nj}$ with 
$-\sfe_{nj}\sft_{in}(u)+[\sfe_{nj},\sft_{in}(u)]=
-\sfe_{nj}\sft_{in}(u)+\del_{ij}\sft_{nn}(u)-\sft_{ij}(u)$.
Since $\sfe_{nj}\in\nil^{-}_{S}$, $pr_n(\sfe_{nj}\sft_{in}(u))=0$
and after routine transformations, we obtain:
\begin{equation*}
\sum_{p=1}^{n-1} pr_n \sft_{ip}(u)((u-1)\del_{pj}-\sfe_{pj})=
\del_{ij}(1-pr_n \sft_{nn}(u)).
\end{equation*}
Substituting the explicit form of $pr_n\sft_{nn}(u)$ just proved, 
this may be restated in the matrix form as follows:
\begin{equation*}
((u-1)I_{n-1}-\sfe')(pr_n \sft(u))'=(1-\frac{1}{u-\lam_n})I_{n-1}.
\end{equation*}
Multiplying both sides by $\sft'(u-1)$ on the left, we obtain 
the desired formula for $pr \sft_{ij}(u), 1\leq i,j\leq n-1$.
\end{proof}

In order to formulate our main result for
the general linear Lie algebra, we need to introduce certain
combinatorial objects.
\begin{definition}
A \emph{falling sequence} is an arithmetic progression with the
step $-1$.
\end{definition}
Next we construct a canonical decomposition of an arbitrary sequence
$l$ into a disjoint union (or shuffle) 
of falling subsequences, called the \emph{shuffle decomposition}. 
(We shall refer to these subsequences
as the \emph{parts} of the shuffle decomposition.) 
This decomposition is best described
by an algorithm. For an empty sequence the shuffle decomposition
is empty. Suppose that the shuffle decomposition 
for a sequence $l'=(l_1,\ldots,l_{n-1})$ has already been
constructed. Let $S$ be its longest part ending in $l_n+1$.
Then the shuffle decomposition of $l=(l_1,\ldots,l_n)$ is obtained by 
appending $l_n$ to the end of $S$, and leaving the other parts intact.
If no such (non-empty) $S$ exists, we create a new part consisting
of a single term $l_n$. It is clear that all parts constructed by
this algorithm are going to be falling sequences, and the decomposition
has a certain minimality property.
\begin{example}
Let $l=(3,3,2,4,1,3,2,2,1)$. Reading the sequence $l$ from
left to right, we easily obtain the shuffle decomposition:
\begin{equation*}
\begin{aligned}
& \{3\} \to 
  \{3\}\sqcup\{3\} \to
  \{3,2\}\sqcup\{3\} \to
  \{3,2\}\sqcup\{3\} \sqcup\{4\}\to 
  \{3,2,1\}\sqcup\{3\} \sqcup\{4\}\to\\ 
& \{3,2,1\}\sqcup\{3\} \sqcup\{4,3\}\to
  \{3,2,1\}\sqcup\{3\} \sqcup\{4,3,2\}\to 
  \{3,2,1\}\sqcup\{3,2\} \sqcup\{4,3,2\}\to\\
&  \{3,2,1\}\sqcup\{3,2\} \sqcup\{4,3,2,1\}.
\end{aligned}
\end{equation*}
Note that on the last step both $\{4,3,2\}$ and $\{3,2\}$ end in $2$, but
we append $1$ to the former subsequence because it is the longer of
the two.
\end{example}
\begin{remark}
The shuffle decomposition of $l$ can also be constructed by reading
its components from right to left. 
\end{remark}
The following theorem completely determines the minimal polynomial of 
a simple highest weight $\gln$-module.
\begin{theorem}\label{thm:minpol_gl}
Let $L(\lam)$ be the simple highest weight $\gln$-module with
highest weight $\lam, l=\lam+\rho$, and the multiset $A$ consist
of the last (i.e. the smallest) terms of the parts of the 
shuffle decomposition of the $n$-element sequence $l$. 
Then $A$ is the multiset of roots of the minimal polynomial
of $L(\lam)$.
\end{theorem}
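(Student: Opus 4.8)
The plan is to combine Proposition~\ref{prop:technique} with the inductive structure furnished by Proposition~\ref{prop:pr_gl}. By Proposition~\ref{prop:technique}, the roots of the minimal polynomial of $L(\lam)$, counted with multiplicity, are the poles of the scalar series $pr\,\sft_{\pi}(u) = \sum_{i=1}^n pr\,\sft_{ii}(u) \in \Comp((u^{-1}))$, where $pr = ev_\lam \circ pr_{\gg/\car}$. The first step is therefore to obtain a closed form for this trace of the resolvent. Using the composition formula $pr = pr' \circ pr_n$ together with the formulas~\eqref{eq:rel_gl}, I would prove by induction on $n$ that
\begin{equation*}
pr\,\sft_{\pi}(u) \;=\; \sum_{i=1}^{n} pr\,\sft_{ii}(u) \;=\; \sum_{i=1}^{n} \frac{c_i}{u - l_i + n - i} \;=\; \sum_{i=1}^{n} \frac{c_i}{u - \lam_i},
\end{equation*}
for suitable rational coefficients $c_i$; more precisely, I expect that telescoping the shift $u \mapsto u-1$ in~\eqref{eq:rel_gl} produces, after accumulating $n-i$ unit shifts for the $i$-th diagonal entry, simple poles located exactly at the values $\lam_i = l_i - (n-i)$. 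The base case $n=1$ is $pr\,\sft_{11}(u) = (u-\lam_1)^{-1}$, and the inductive step feeds the $(n-1)\times(n-1)$ answer for $\lam'$ into the right-hand formula of~\eqref{eq:rel_gl}, contributing the factor $1 - (u-\lam_n)^{-1}$ and a shift by $1$.

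The crux of the argument is then purely combinatorial: I must show that after forming the sum $\sum_i c_i/(u-\lam_i)$ and collecting terms with equal denominators, the surviving poles — i.e. those whose residues do not cancel — are exactly the minimal terms of the parts of the shuffle decomposition of $l$, with the correct multiplicities. The natural way to organize this is again by induction on $n$, tracking how the shuffle decomposition changes when $l_n$ is appended to $l' = (l_1, \dots, l_{n-1})$. Passing from $l'$ to $l$ either (a) lengthens an existing part $S$ ending in $l_n + 1$ by appending $l_n$, which replaces the surviving root $l_n+1$ (coming from the old last term of $S$, reindexed) by $l_n$; or (b) creates a new singleton part $\{l_n\}$, which simply adjoins a new root $l_n$ (equivalently $\lam_n$ after the $\rho$-shift) to the multiset. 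On the resolvent side, case (b) corresponds to a genuinely new simple pole at $u = \lam_n$ with nonzero residue, while case (a) corresponds to a residue at the "incoming" value colliding with, and cancelling against, a residue at the value vacated by the part being extended — leaving net a single pole shifted down by one, matching the combinatorics. The rule that one extends the \emph{longest} available part ending in $l_n+1$ must be matched by showing that the resolvent's residues are arranged so that it is precisely that part's contribution which survives; I anticipate this falls out of the ordering built into the shifts $u-1$ in~\eqref{eq:rel_gl} and the associativity of the composition formula.

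I expect the main obstacle to be exactly the bookkeeping in case (a): one must verify that the residue cancellation in $pr\,\sft_\pi(u)$ happens between the correct pair of terms, and that no unintended cancellation occurs among the other poles. Equivalently, I need that the multiset of poles is stable under the operation "append $l_n$" in a way that mirrors the shuffle algorithm's "append to the longest eligible part, else start a new part" rule. A clean way to sidestep delicate residue computations is to reformulate the identity as a statement about the rational function $pr\,\sft_\pi(u)$ itself: I would show that its numerator, after clearing denominators against $\prod_i (u-\lam_i)$, has as its roots exactly the non-minimal terms of the parts (so that the reduced denominator is $\prod_{a \in A}(u-a)$), and prove \emph{that} by induction using~\eqref{eq:rel_gl}. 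Finally, I should record that Proposition~\ref{prop:technique} guarantees not merely that the roots of the minimal polynomial lie in $A$, but that they coincide with $A$ as multisets — the pole multiplicities of $pr\,\sft_\pi(u)$ being preserved — which is why counting repeated minimal terms of the shuffle parts (as in the $\{3,2,1\}$, $\{3,2\}$, $\{4,3,2,1\}$ example, where $1$ occurs twice) gives the full minimal polynomial rather than just its radical.
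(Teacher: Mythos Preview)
Your opening step contains a genuine error: Proposition~\ref{prop:technique} does \emph{not} identify the roots of the minimal polynomial with the poles of the trace $\sum_i pr\,\sft_{ii}(u)$. The minimal polynomial of a matrix is the least common denominator of \emph{all} entries of its resolvent (Corollary after Proposition~\ref{prop:min_pole}), so the relevant poles are those of the individual series $pr\,\sft_{ij}(u)$, taken together with the maximum order among them. Since off-diagonal entries have nonzero $ad(\car)$-weight, Proposition~\ref{prop:projprop}(3) reduces this to the diagonal entries $pr\,\sft_{ii}(u)$ --- but you must track them \emph{separately}, not their sum. Summing can produce cancellations that are not present in any single entry. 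A concrete counterexample already occurs for $n=2$: take $\lam=(\lam_1,\lam_1+2)$, so that $l=(\lam_1+1,\lam_1+2)$ and the shuffle decomposition has two singleton parts, giving $A=\{\lam_1+1,\lam_1+2\}$. Formulas~\eqref{eq:rel_gl} yield
\[
pr\,\sft_{11}(u)=\frac{u-\lam_1-3}{(u-\lam_1-2)(u-\lam_1-1)},\qquad
pr\,\sft_{22}(u)=\frac{1}{u-\lam_1-2},
\]
each with a simple pole at $\lam_1+2$; yet their sum collapses to $2/(u-\lam_1-1)$, and the pole at $\lam_1+2$ disappears. Your trace-based criterion would therefore output a degree-one minimal polynomial, which is wrong.

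The paper's proof avoids this by never forming the sum. It works directly with the family $\{pr\,\sft_{ij}(u)\}$ and uses the factorization $f(u)=(u-\lam_n)g(u)$ together with the identity $(u-\lam_n)\bigl(1-\tfrac{1}{u-\lam_n}\bigr)=u-\lam_n-1$ to rewrite $f(u)\,pr\,\sft_{ij}(u)$ as $(u-\lam_n-1)\,g(u)\,pr'\sft'_{ij}(u-1)$ for $1\le i,j\le n-1$. This reduces the question of whether $f$ clears all poles to the same question for $g$ and the $(n-1)\times(n-1)$ resolvent, with a single controlled interaction between the new factor $(u-\lam_n-1)$ and the poles of $pr'\sft'(u-1)$. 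That interaction is exactly the ``append $l_n$ to an existing part or start a new one'' step of the shuffle algorithm. A secondary issue: your displayed guess $pr\,\sft_{ii}(u)=c_i/(u-\lam_i)$ with a single simple pole per diagonal entry is also incorrect, as the $n=2$ computation above already shows; each $pr\,\sft_{ii}(u)$ typically carries several poles accumulated from the inductive shifts.
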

\begin{proof}
By Proposition \ref{prop:technique}, we need to relate the 
locations and the maximum orders of the
poles of $pr \sft_{ij}(u), 1\leq i,j\leq n$ to the 
shuffle decomposition of the $n$-element sequence $l=\lam+\rho$. 
This is done by induction in $n$ using the formulas \eqref{eq:rel_gl} for
the relative map $pr_n$ and the composition formula $pr=pr'\circ pr_n$.

From the first formula \eqref{eq:rel_gl} we see 
that the minimal polynomial
of $L(\lam)$ must be divisible by $u-\lam_n$. Let $f(u)=(u-\lam_n)g(u)$.
Suppose that $1\leq i,j\leq n-1$, then from the second formula 
\eqref{eq:rel_gl} we get:
\begin{equation}\label{eq:poles_gl}
f(u) pr\sft_{ij}(u)=(u-\lam_n)g(u) pr'(pr_n \sft_{ij}(u))=
(u-1-\lam_n)g(u) pr'\sft'_{ij}(u-1). 
\end{equation}
By the inductive assumption,
the poles of $pr'\sft'_{ij}(u)$ are 
the last terms of the parts of the shuffle decomposition
of $\lam'+\rho'=(\lam_1+n-2, \ldots, \lam_{n-1})$, thus
the poles of $pr'\sft'_{ij}(u-1)$ are 
the last terms of the parts of the shuffle decomposition
of $l'=(\lam_1+n-1, \ldots, \lam_{n-1}+1)$. There are
two cases to consider: either one of these parts, which we call $S$, ends
in $\lam_n+1$, or none of them does. In the former case, we append
$\lam_n$ to the end of $S$, changing the location of the corresponding
pole from $\lam_n+1$ to $\lam_n$. In the latter case, we create
a new part consisting of $\lam_n$. In either case, we see from 
\eqref{eq:poles_gl} and the algorithm for constructing the shuffle
decomposition 
that as we pass from $l'$ to $l$, 
the relation between the shuffle decomposition and the poles 
of the formal resolvent remains unaffected. 
\end{proof}
\begin{remark}
One may ask whether the other elements of the shuffle decomposition of 
$l=\lam+\rho$ admit a similar interpretation. The answer is affirmative and
is related to the notion of \emph{quantized elementary divisors} of the
module $L(\lam)$, see \cite{Protsak_divisor}.
\end{remark}
%
%
%%%%%%%%%%%%%%%%%%%%%%%%%%%%%%%%%%%%%%%%%%%%%%%%%%%%%%%%%%%%%
%
\section{Minimal polynomials: symplectic/orthogonal case}
%
%%%%%%%%%%%%%%%%%%%%%%%%%%%%%%%%%%%%%%%%%%%%%%%%%%%%%%%%%%%%%
%
Following \cite{MNO}, we consider split realizations of orthogonal Lie 
algebras and introduce notation that simultaneously 
incorporates the cases of symplectic and orthogonal Lie subalgebras
of the general linear Lie algebra $\glN$. Many of the
formulas below appear most naturally in the context of
the \emph{twisted Yangians}, introduced by Olshanskii in
\cite{Olshanskii_twisted} and further studied in 
\cite{MNO}, but we will not pursue the connection in this paper.

Let $N=2n$ (even case) or $N=2n+1$ (odd case).
We consider a complex vector space $\Comp^N$ with a basis
$\{v_i: -n\leq i\leq n\}$, where $v_0$ is absent in the even case.
This space is endowed with a non-degenerate bilinear form 
$\langle\cdot,\cdot\rangle_{\pm}$, where the subscript $+$ (respectively, $-$) 
indicates that the form is symmetric (respectively, skew-symmetric):
\begin{equation}
\langle v_i, v_j\rangle_{+}=\del_{i,-j} \quad \textrm{(orthogonal)},\quad
\langle v_i, v_j\rangle_{-}=\sgn i\,\del_{i,-j} \quad \textrm{(symplectic)}.
\end{equation}
Let $\theta_{ij}=1$ in the orthogonal case, and $\theta_{ij}=\sgn i\sgn j$
in the symplectic case. The classical Lie algebra $\gg$ is the subalgebra
of $\glN$ consisting of $N\times N$ matrices skew-symmetric with respect
to the form and is spanned by the following elements:
\begin{equation}\label{eq:Fij}
\sff_{ij}=\sfe_{ij}-\theta_{ij}\sfe_{-j,-i},
\end{equation}
where $\sfe_{ij}$ are the usual matrix units. The generators \eqref{eq:Fij}
satisfy the following commutation relations:
\begin{equation}\label{eq:sff_comm}
[\sff_{ij}, \sff_{kl}]=\del_{kj}\sff_{il}-\del_{il}\sff_{kj}
-\theta_{k,-j}\del_{i,-k}\sff_{-j,l}+\theta_{i,-l}\del_{-l,j}\sff_{k,-i}.
\end{equation}
Arrange $\{\sff_{ij}\}$ into an $N\times N$ matrix $\sff\in\sfm_N(\ug)$,
cf \eqref{eq:sfe}. Choose a triangular decomposition 
$\gg=\nil^{+}\oplus\car\oplus\nil^{-}$, 
where $\nil^{+}$ is spanned by $\{\sff_{ij}: i<j\}$, $\nil^{-}$ is spanned
by $\{\sff_{ij}: i>j\}$ and $\car$ is spanned by $\{\sff_{ii}\}$. 
We have $\sff_{ii}=-\sff_{-i,-i}$, the elements 
$\sfh_1=\sff_{-n,-n}, \ldots, \sfh_n=\sff_{-1,-1}$ form a basis of $\car$,
and we will identify the weights $\lam\in\card$ with the $n$-tuples 
$(\lam_1,\ldots, \lam_n)$ of their values on the elements of this basis:
\begin{equation}
\lam_i=\lam(\sfh_i)=\lam(\sff_{i-n-1,i-n-1}).
\end{equation}
Further, we set $\eps=0$ for $\gg$ even orthogonal, $\eps=\frac{1}{2}$ for
$\gg$ odd orthogonal, and $\eps=1$ for $\gg$ symplectic. The half sum
of the positive roots $\rho$ and the $\rho$-shifted highest weight 
$l=\lam+\rho$ are then given by the $n$-tuples
\begin{equation}
\rho=(\eps+n-1,\ldots,\eps), {\ } 
l=\lam+\rho=(\lam_1+\rho_1,\ldots, l_n+\rho_n), \quad 
\textrm{where } l_i=\lam_i+n-i+\eps.
\end{equation}

Denote by $\sft(u)$ the formal resolvent of the $N\times N$ matrix
$\sff$ over $U(\gg)$.
\begin{prop}
The entries of the matrix $\sft(u)$ transform under the adjoint
action of $\gg$ in the same way as the entries of the matrix
$\sff$. More explicitly,
\begin{equation}\label{eq:sft_gg}
[\sff_{ij}, \sft_{kl}(u)]=\del_{kj}\sft_{il}(u)-\del_{il}\sft_{kj}(u)
-\theta_{k,-j}\del_{i,-k}\sft_{-j,l}(u)+\theta_{i,-l}\del_{-l,j}\sft_{k,-i}(u).
\end{equation}
\end{prop}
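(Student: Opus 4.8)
The statement is the exact analogue, for the symplectic/orthogonal generators $\sff_{ij}$, of Proposition \ref{prop:comrel_gl} for $\gln$, and the natural strategy is to imitate that proof. The key input is the standard commutator identity for the inverse of a matrix over a noncommutative algebra: if $X$ is invertible with entries in an associative algebra, $x$ is an element of that algebra, and we abbreviate $Y = X^{-1}$, then
\begin{equation*}
[x, Y_{kl}] = -\sum_{p,q} Y_{kp}\,[x, X_{pq}]\,Y_{ql}.
\end{equation*}
First I would apply this with $x = \sff_{ij}$, $X = uI_N - \sff$ (so $Y = \sft(u)$), and $[x, X_{pq}] = -[\sff_{ij}, \sff_{pq}]$, and substitute the structure constants from the commutation relations \eqref{eq:sff_comm}.

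Next comes the bookkeeping step: the right-hand side of \eqref{eq:sff_comm} has four Kronecker-delta terms, so after substitution the double sum $\sum_{p,q}$ collapses into four single sums (one per delta), giving
\begin{equation*}
[\sff_{ij}, \sft_{kl}(u)] = \sum_{p,q} \sft_{kp}(u)\Bigl(\del_{qj}\sff_{ip} - \del_{ip}\sff_{qj} - \theta_{q,-j}\del_{i,-q}\sff_{-j,p} + \theta_{i,-p}\del_{-p,j}\sff_{q,-i}\Bigr)\sft_{ql}(u).
\end{equation*}
Resolving the deltas in each of the four terms and then recognizing the resulting expressions $\sum_q \sft_{kq}(u)\sff_{q\bullet} $ (resp. $\sum_p \sff_{\bullet p}\sft_{pl}(u)$) as entries of the products $\sft(u)\sff$ and $\sff\,\sft(u)$, one uses $\sft(u)\sff = \sff\,\sft(u) = u\,\sft(u) - I_N$ (immediate from $\sft(u)(uI_N - \sff) = I_N$) to cancel all the $\sft\sff\sft$ "double" factors against single $\sft$'s. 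The four surviving terms are exactly $\del_{kj}\sft_{il}(u) - \del_{il}\sft_{kj}(u) - \theta_{k,-j}\del_{i,-k}\sft_{-j,l}(u) + \theta_{i,-l}\del_{-l,j}\sft_{k,-i}(u)$, which is the claimed formula \eqref{eq:sft_gg}.

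\textbf{Main obstacle.} The substantive risk is purely in the index gymnastics of the third and fourth terms: the factors $\theta_{k,-j}$ and $\theta_{i,-l}$ depend on indices that get identified by the deltas, and in the symplectic case $\theta$ is sign-sensitive, so one must be careful that e.g. $\theta_{i,-q}$ with $q = -i$ becomes $\theta_{i,i} = 1$ and that the signs produced by reindexing $\sum_q \sft_{kq}\sff_{q,-i}$ versus the "$-j$" slot match up correctly. I would handle this the same way the paper's other proofs do — by pointing out that a conceptual proof is available: the span of the entries of $uI_N - \sff$ together with the entries of its inverse $\sft(u)$ is stable under the adjoint action of the group $G$ (symplectic or orthogonal), and since $uI_N - \sff$ transforms as a matrix in the defining representation tensored with its dual (restricted to $\gg$), so does $\sft(u)$; formula \eqref{eq:sft_gg} is just the infinitesimal form of this, with the same structure constants as \eqref{eq:sff_comm}. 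That argument sidesteps the delta-chasing entirely, and I would present the direct computation only as a sketch with the group-theoretic remark as the clean alternative, exactly parallel to the proof of Proposition \ref{prop:comrel_gl}.
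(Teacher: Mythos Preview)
Your approach is exactly what the paper does: its entire proof is the single sentence ``This is proved in the same way as Proposition \ref{prop:comrel_gl},'' i.e., the commutator-of-inverse identity applied to $X=uI_N-\sff$ together with the optional group-theoretic argument, just as you outline. (One cosmetic slip: in your displayed substitution the roles of $p$ and $q$ are interchanged relative to \eqref{eq:sff_comm}, but this does not affect the argument.)
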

\begin{proof}
This is proved in the same way as Proposition \ref{prop:comrel_gl}.
\end{proof}
Let $S$ be the complement of the 
first simple root in the root system $B$ of $\gg$ (in standard enumeration). 
The parabolic subalgebra $\pp_S=\nil^{+}_{S}\oplus\lev_S$ 
is the first standard 
maximal parabolic of $\gg$ and consists of the matrices in $\gg\subset\glN$ 
that stabilize the one-dimensional isotropic subspace of $\Comp^N$ spanned 
by the first standard basis vector $v_{-n}$. 
We have a triangular decomposition
$\gg=\nil^{+}_{S}\oplus\lev_S\oplus\nil^{-}_{S}$, where  
the Levi factor
$\lev_S=\gg'\oplus\gl_1$ is spanned by $\sff_{ij}$
($i,j\ne\pm n$) together with  $\sff_{nn}=-\sff_{-n,-n}$, 
the upper nilradical $\nil^{+}_{S}$ is spanned by
$\sff_{-n,j}$ ($j\ne\pm n$) and the lower nilradical $\nil^{-}_{S}$ 
is spanned by  $\sff_{i,n}$ ($i\ne\pm n$). The Lie algebra
$\gg'$ has the same Dynkin type as $\gg$ (i.e. even orthogonal, odd
orthogonal, or symplectic) but its rank is one smaller than the rank of $\gg$.

Denote by $\sff'$ the $(N-2)\times(N-2)$ submatrix of $\sff$ 
formed by the elements $\sff_{ij}$ with $i,j\ne\pm n$ and by 
$\sft'(u)\in\sfm_{n-1}(U(\gg'))[[u^{-1}]]$ its formal resolvent.
Let $\tr'\sft(u)$ be the sum of the $N-2$ diagonal entries of the $N\times N$
matrix $\sft(u)$ in rows with indices different from $\pm n$. 
Let us write $\lam=(\lam_1,\lam')$, so that $\lam'_p=\lam_p$ 
for $2\leq p\leq n$.
As in the general linear case, the maps $pr:\ug\to\Comp, pr':U(\gg')\to\Comp$
denote the compositions of the absolute Harish-Chandra projections with
the evaluation at $\lam$ and $\lam'$. The relative map
$pr_n:\ug\to U(\gg')$ denotes the composition 
of the relative Harish-Chandra projection 
$pr_{\gg/\lev_S}:\ugn\to U(\gg')\otimes U(\gl_1)$ and
the evaluation homomorphism $\sfh_n\mapsto\lam_n$. 
By Proposition \ref{prop:composition}, these maps are related by 
the composition formula $pr=pr'\circ pr_n$. 
\begin{prop}\label{prop:pr_gg}
Let $v=u-\rho_1=u-(n-1+\eps)$ and $l=\lam+\rho$.
The following formulas describe the effect of the relative projection
map $pr_n$ on the matrix series $\sft(u)$:
\begin{equation}
\begin{aligned}
& pr_n\sft_{nn}(u)=\frac{1}{v+l_1}, \\
& pr_n\sft_{ij}(u)=(1-\frac{1}{v+l_1})\sft'_{ij}(u-1), i,j\ne\pm n, \\
& pr_n\sft_{-n,-n}(u)=\frac{1-\tr'\sft(u)}{v-l_1} \quad 
\textrm(orthogonal{\ }case); \\
& pr_n\sft_{-n,-n}(u)=\frac{1-2(v+l_1)^{-1}-\tr'\sft(u)}{v-l_1} \quad 
\textrm(symplectic{\ }case).
\end{aligned}
\end{equation}
Moreover, in the symplectic and even orthogonal case we
have:
\begin{equation}\label{eq:prtr_even}
pr\tr\sft(u)=\frac{v-2\eps}{v-{1/2}}
\big(1-\prod_{i=1}^n\frac{(v-1)^2-l_i^2}{v^2-l_i^2}\big),
\end{equation}
while in the odd orthogonal case 
\begin{equation}\label{eq:prtr_odd}
pr\tr\sft(u)=\frac{v}{v-{1/2}}-\frac{v-1}{v-{1/2}}
\big(1-\prod_{i=1}^n\frac{(v-1)^2-l_i^2}{v^2-l_i^2}\big).
\end{equation}
\end{prop}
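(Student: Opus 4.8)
The plan is to mimic the inductive strategy of Proposition \ref{prop:pr_gl}, but now working with the matrix identity $\sum_{k}\sft_{ik}(u)(u\del_{kj}-\sff_{kj})=\del_{ij}$ coming from $(uI_N-\sff)\sft(u)=I_N$. First I would split the sum according to whether the middle index $k$ equals $-n$, equals $n$, or lies in the ``inner'' range $k\ne\pm n$, and apply $pr_n$. For the easy entries I expect the same mechanism as before: $\sff_{-n,j}\in\nil^+_S$ and $\sff_{i,n}\in\nil^-_S$ kill one group of terms, so from the $(n,n)$-entry one reads off $pr_n\sft_{nn}(u)(v+l_1)=1$ once we note $pr_n\sff_{nn}=\lam_n=l_1-\rho_1$ and $v=u-\rho_1$; and from the inner $(i,j)$ block, after commuting $-\sft_{i,n}(u)\sff_{nj}$ past and using \eqref{eq:sft_gg} to produce the correction $\del_{ij}\sft_{nn}(u)-\sft_{ij}(u)$ (plus $\theta$-terms that vanish because they involve indices outside the inner range or are killed by $pr_n$), one gets $((u-1)I_{N-2}-\sff')(pr_n\sft(u))' = (1-pr_n\sft_{nn}(u))I_{N-2}$, which yields the second formula upon multiplying by $\sft'(u-1)$.

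The genuinely new case is the $(-n,-n)$-entry. Here the identity reads $\sum_k \sft_{-n,k}(u)(u\del_{k,-n}-\sff_{k,-n})=1$, and now the relevant vanishings go the other way: $\sft_{-n,k}$ for $k$ in the inner range pairs against $\sff_{k,-n}\in\nil^-_S$, but one must instead commute to expose a left nilradical factor, or equivalently use \eqref{eq:sft_gg} to rewrite $\sft_{-n,k}(u)\sff_{k,-n}$. The term $k=-n$ contributes $\sft_{-n,-n}(u)(u-\sff_{-n,-n})$ with $pr_n\sff_{-n,-n}=-\lam_n$, giving the denominator $v-l_1$ after the shift. The term $k=n$ contributes $-\sft_{-n,n}(u)\sff_{n,-n}$, and here $\sff_{n,-n}$ is a root vector in $\nil^-_S$ but $\sff_{n,-n}$ itself times $\sft_{-n,n}$ produces, after commuting via \eqref{eq:sft_gg}, a term proportional to $\sft_{nn}(u)$ or $\sft_{-n,-n}(u)$ (this is where the $\theta$-terms in \eqref{eq:sff_comm}, i.e. the $\del_{i,-k}$ pieces, finally matter — they are nonzero precisely when $i=-k=\pm n$), and in the symplectic case an extra $2(v+l_1)^{-1}$ appears because $\theta_{n,n}=1$ while in orthogonal it does not survive. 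The inner terms $k\ne\pm n$, after applying \eqref{eq:sft_gg}, collapse to $-\sum_{k\ne\pm n}\sft_{kk}(u)$ up to terms killed by $pr_n$, which is exactly $-\tr'\sft(u)$ after projection; assembling all contributions and dividing by $v-l_1$ gives the two $(-n,-n)$-formulas. The main obstacle I anticipate is keeping honest track of all the $\theta$-coefficients and $\del$-terms in \eqref{eq:sff_comm}/\eqref{eq:sft_gg} when commuting the generator past $\sft$ in the $(-n,-n)$ computation, and in particular correctly accounting for the anomalous diagonal term $\sff_{-n,-n}$ versus $\sff_{nn}$ (the $i=-k$ coincidence) that distinguishes the symplectic from the orthogonal case — this is the one place where the two cases genuinely diverge.

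For the trace formulas \eqref{eq:prtr_even} and \eqref{eq:prtr_odd}, the plan is to compute $pr\,\tr\sft(u)$ by induction on $n$, summing the $N$ diagonal projections just obtained. Write $\tr\sft(u) = \sft_{nn}(u)+\sft_{-n,-n}(u)+\tr'\sft(u)$ inside the inner $N-2$ matrix (plus $\sft_{00}(u)$ in the odd case). Applying $pr_n$ and then $pr'$: the $(n,n)$ piece gives $\tfrac{1}{v+l_1}$; the inner piece, using $pr_n\sft_{ij}(u)=(1-\tfrac{1}{v+l_1})\sft'_{ij}(u-1)$, contributes $(1-\tfrac{1}{v+l_1})\,pr'\tr'\sft'(u-1)$, which by the inductive hypothesis is a known rational function in $v-1$ built from $l_2,\ldots,l_n$; and the $(-n,-n)$ piece gives $\tfrac{1}{v-l_1}\bigl(1 - (\text{sympl. correction}) - pr\,\tr'\sft(u)\bigr)$ where $pr\,\tr'\sft(u)$ is again the same inductively-known quantity. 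Solving the resulting linear relation for $pr\,\tr\sft(u)$ in terms of $pr\,\tr'\sft'(u-1)$, one should see the telescoping product $\prod_i\frac{(v-1)^2-l_i^2}{v^2-l_i^2}$ emerge: the factor for $i=1$ is produced by the $(v+l_1)^{-1}$ and $(v-l_1)^{-1}$ denominators combining, and the remaining factors come from the inductive formula with $v\mapsto v-1$ since $l'_p = l_p$ and $\rho'_1 = \rho_1 - 1$. One checks the base case $n=0$ (or $n=1$) directly, verifying the prefactor $\tfrac{v-2\eps}{v-1/2}$ in the symplectic/even case and the $\tfrac{v}{v-1/2} - \tfrac{v-1}{v-1/2}(\cdots)$ shape in the odd case, where the extra $\sft_{00}(u)$ diagonal entry is responsible for the asymmetric form. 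The main bookkeeping risk here is getting the $\eps$-dependent prefactors exactly right and correctly handling the $v_0$ row in the odd orthogonal case, but structurally this is a routine induction once the diagonal formulas above are in hand.
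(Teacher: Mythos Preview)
Your approach is essentially the same as the paper's. The paper's proof of Proposition \ref{prop:pr_gg} only carries out the details for the first two formulas (the $(n,n)$ entry and the inner block $i,j\ne\pm n$), following exactly the line you describe: split the resolvent identity \eqref{eq:Tinv_gg} by the middle index, kill terms using $\sff_{-n,j}\in\nil^+_S$, commute $\sft_{i,n}(u)\sff_{nj}$ via \eqref{eq:sft_gg}, and invert the resulting $(N-2)\times(N-2)$ system. Your sketch for these parts matches the paper verbatim. The paper then stops without writing out the $(-n,-n)$ computation or the inductive derivation of the trace formulas; your plan for those parts (commuting to expose the $\tr'\sft(u)$ contribution and the $\theta$-dependent symplectic correction, then running induction on $n$ via $pr=pr'\circ pr_n$) is precisely the intended completion.

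Two small slips to fix when you write it out. First, the $\gl_1$ factor of $\lev_S$ is spanned by $\sff_{-n,-n}=\sfh_1$, so the evaluation sends $\sff_{nn}\mapsto -\lam_1$ (not $\lam_n$); this is what gives $u+\lam_1=v+l_1$. Second, in the odd orthogonal case $\tr'\sft(u)=\sum_{k\ne\pm n}\sft_{kk}(u)$ already includes the $k=0$ term, so there is no separate ``plus $\sft_{00}(u)$'' in the decomposition $\tr\sft(u)=\sft_{nn}(u)+\sft_{-n,-n}(u)+\tr'\sft(u)$; the asymmetric shape of \eqref{eq:prtr_odd} comes out of the induction itself, with the base case supplying the $\tfrac{v}{v-1/2}$ term.
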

\begin{remark}
When both sides of \eqref{eq:prtr_even} and \eqref{eq:prtr_odd}
are expanded in negative powers of $u$, one recovers 
the Perelomov-Popov formulas that describe the action 
of the \emph{Gelfand
invariants} $\tr\sff^p\in Z(\gg)$ in the simple highest weight
module $L(\lam)$, \cite{Zhelobenko}. 
\end{remark}
\begin{proof}
The proof of the first two formulas is similar to the
proof of the corresponding result (Proposition \ref{prop:pr_gl})
in the general linear case. Therefore, some steps in the proof
will only be sketched. 
Since the matrices $\sft(u)$ and 
$uI_N-\sff$ are mutually inverse, we have:
\begin{equation} \label{eq:Tinv_gg}
\sum_{k}\sft_{ik}(u)(u\del_{kj}-\sff_{kj})=\del_{ij}.
\end{equation}
First, let $i=j=n$. Splitting the sum into parts and
proceeding as before, we find that $pr_n(\sft_{nn}(u)(u-\sff_{nn}))=1$,
therefore, $pr_n T_{nn}(u)(u+\lam_n)=1$, and since $u+\lam_1=v+l_1$,
we have established the first
formula. Now suppose that $i,j\ne \pm n$. Splitting the sum
in the left hand side of \eqref{eq:Tinv_gg}
into three parts, according to whether $k\ne\pm n$, $k=n$ or
$k=-n$ and applying $pr_n$ to both sides, we get:
\begin{equation*}
\sump pr_n\sft_{ip}(u)(u\del_{pj}-\sff_{pj})
-pr_n(\sft_{i,-n}(u)\sff_{-nj})-pr_n(\sft_{in}(u)\sff_{nj})=\del_{ij}.
\end{equation*}
Here $\sump$ indicates the sum over the index $p\ne\pm n$.
Note that since $\sff_{-nj}\in\nil^{+}_{S}$, the middle
term in the left hand side is zero.
Using the commutation relations \eqref{eq:sft_gg}, we may
replace $-\sft_{in}(u)\sff_{nj}$ with 
$-\sff_{nj}\sft_{in}(u)+[\sff_{nj},\sft_{in}(u)]=-\sff_{nj}\sft_{in}(u)
+\del_{ij}\sft_{nn}(u)-\sft_{ij}(u)$.
Since $\sff_{nj}\in\nil^{-}_{S}$, $pr_n(\sff_{nj}\sft_{in}(u))=0$
and after routine transformations, we obtain:
\begin{equation*}
\sump pr_n\sft_{ip}(u)((u-1)\del_{pj})=\del_{ij}(1-pr_n\sft_{nn}(u)).
\end{equation*}
Proceeding in the same way as in the proof of Proposition \ref{prop:pr_gl},
we establish the formula for $pr_n\sft_{ij}(u), i,j\ne\pm n$.
\end{proof}
An important distinction of the symplectic/orthogonal case 
that we are considering now from the general linear case treated
earlier is that $\End\pi=\End\Comp^N$ decomposes into the direct sum 
$\Lambda^{2}\Comp^N\oplus S^{2}\Comp^N$ of $\gg$-submodules.
\begin{definition}\label{def:evenodd}
Fix an infinitesimal character of $\gg$-modules.
A polynomial
$p\in\Comp[t]$ is called \emph{even} (respectively, \emph{odd}) 
modulo center if the identity 
\begin{equation*}
p(\sff)_{ij}=\pm \theta_i\theta_j p(\sff_{-j,-i})
\mod \ann M(\lam) 
\end{equation*}
holds for all $i,j$ with the plus (respectively, the minus) sign.
\end{definition}
\begin{prop}
Suppose that $V$ is a module with infinitesimal character. Then
the minimal polynomial of $V$ is either even or odd.
\end{prop}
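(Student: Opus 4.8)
The plan is to show that the matrix $\sff_{\pi,V}$ itself satisfies a symmetry relation modulo $\ann V$, and then transfer this symmetry to any polynomial in $\sff_{\pi,V}$. First I would record the elementary observation that, by \eqref{eq:Fij}, the defining matrix satisfies $\sff_{ij}=-\theta_{ij}\sff_{-j,-i}=-\theta_i\theta_j\sff_{-j,-i}$, so the linear (degree-one) polynomial $p(t)=t$ is odd modulo center for \emph{every} weight $\lam$. The key step is then to analyze how the relation propagates under matrix multiplication. If $p$ and $q$ are each even-or-odd modulo center, with signs $\eta_p,\eta_q\in\{\pm1\}$, I would compute
\begin{equation*}
(p(\sff)q(\sff))_{ij}=\sum_k p(\sff)_{ik}q(\sff)_{kj}\equiv
\eta_p\eta_q\sum_k \theta_i\theta_k p(\sff_{-k,-i})\,\theta_k\theta_j q(\sff_{-j,-k})
\mod\ann M(\lam),
\end{equation*}
and since $\theta_k^2=1$ and the sum over $k$ may be reindexed $k\mapsto -k$, the right side equals $\eta_p\eta_q\,\theta_i\theta_j(q(\sff)p(\sff))_{-j,-i}$. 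So powers of $\sff$ are even-or-odd modulo center, with sign $(-1)^m$ for $\sff^m$, \emph{provided} the relevant powers of $\sff$ commute modulo $\ann M(\lam)$; but they visibly do, being powers of a single matrix, so the transpose-type swap $p(\sff)q(\sff)\leftrightarrow q(\sff)p(\sff)$ creates no difficulty.

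Granting this, the argument concludes as follows. Because $V$ has an infinitesimal character, Gould's theorem (cited after Definition \ref{def:minpol}) gives a minimal polynomial $q_{\pi,V}\in\Comp[t]$, and $\ann V\supseteq\ann M(\lam)$ for an appropriate $\lam$ (every primitive ideal, indeed every ideal with a given central character, contains some $\ann M(\lam)$; and the notions ``even/odd modulo center'' depend only on the central character, hence only on $\ann M(\lam)$). Write $q_{\pi,V}(t)=q_{\mathrm{ev}}(t)+q_{\mathrm{odd}}(t)$ as the sum of its even-degree and odd-degree parts. By the computation above, $q_{\mathrm{ev}}(\sff)_{ij}\equiv\theta_i\theta_j q_{\mathrm{ev}}(\sff_{-j,-i})$ and $q_{\mathrm{odd}}(\sff)_{ij}\equiv-\theta_i\theta_j q_{\mathrm{odd}}(\sff_{-j,-i})$ modulo $\ann M(\lam)$. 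Since $q_{\pi,V}(\sff_{\pi,V})=0$, applying the symmetry $\sfe_{ij}\mapsto\theta_i\theta_j(\cdot)_{-j,-i}$ (which is an anti-automorphism of $\mathrm{M}_N$ preserving $\sff$ up to the sign, hence descends to $V$) shows that $q_{\mathrm{ev}}(\sff_{\pi,V})-q_{\mathrm{odd}}(\sff_{\pi,V})=0$ as well. Adding and subtracting, both $2q_{\mathrm{ev}}(\sff_{\pi,V})=0$ and $2q_{\mathrm{odd}}(\sff_{\pi,V})=0$, so each of $q_{\mathrm{ev}}$ and $q_{\mathrm{odd}}$ annihilates $\sff_{\pi,V}$ and is therefore divisible by $q_{\pi,V}$. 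Minimality of $\deg q_{\pi,V}$ forces one of $q_{\mathrm{ev}},q_{\mathrm{odd}}$ to be zero, i.e. $q_{\pi,V}$ is purely even or purely odd in degree; and then the propagation computation shows the polynomial identity defining ``even'' (resp. ``odd'') modulo center holds, as required.

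\textbf{Main obstacle.} The routine part is the bookkeeping with the signs $\theta_{ij}=\theta_i\theta_j$ and the reindexing $k\mapsto -k$ in the convolution; the point that needs the most care is the reduction to $\ann M(\lam)$: one must check that the defining property in Definition \ref{def:evenodd} genuinely depends only on the infinitesimal character (so that ``$V$ has infinitesimal character $\chi_\lam$'' lets us work with a fixed $M(\lam)$), and that the anti-automorphism $X\mapsto (\theta_i\theta_j X_{-j,-i})_{ij}$ of $\mathrm{M}_N(\ug)$ restricts correctly — namely it fixes $\sff$ up to an overall permissible sign and hence is well-defined on $\End V$ after passing mod $\ann V$. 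Once that compatibility is in place, the even/odd dichotomy is forced purely by the degree-minimality of $q_{\pi,V}$, with no further representation-theoretic input needed.
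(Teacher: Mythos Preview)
The paper states this proposition without proof, so let me assess your argument on its own merits.

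Your propagation step contains a real gap. After applying the assumed parities you arrive at
\[
(p(\sff)q(\sff))_{ij}\equiv \eta_p\eta_q\,\theta_i\theta_j\sum_k p(\sff)_{-k,-i}\,q(\sff)_{-j,-k},
\]
and you then identify the right-hand sum with $(q(\sff)p(\sff))_{-j,-i}$. But $(q(\sff)p(\sff))_{-j,-i}=\sum_k q(\sff)_{-j,-k}\,p(\sff)_{-k,-i}$, which differs from your sum by $\sum_k\bigl[p(\sff)_{-k,-i},\,q(\sff)_{-j,-k}\bigr]$. The individual \emph{entries} of $p(\sff)$ and $q(\sff)$ lie in $\ug$ and do not commute. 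Your observation that $p(\sff)$ and $q(\sff)$ commute \emph{as matrices} is beside the point: what the rearrangement requires is commutation of the scalar entries, and that fails.

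This is not a removable technicality; the conclusion you draw from it---that $\sff^m$ has parity $(-1)^m$ modulo center---is already false for $m=2$. Summing the commutation relation \eqref{eq:sff_comm} over $k$ and using $\sff_{-j,-i}=-\theta_{ij}\sff_{ij}$ together with $\sum_k\sff_{kk}=0$, one finds
\[
(\sff^2)_{ij}-\theta_i\theta_j(\sff^2)_{-j,-i}
=\sum_k[\sff_{ik},\sff_{kj}]
=(N\mp 2)\,\sff_{ij},
\]
with the upper sign in the orthogonal case and the lower in the symplectic case. The right-hand side is a nonzero multiple of $\sff_{ij}$, not an element of $\ann M(\lam)$. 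Hence $\sff^2$ is \emph{not} even modulo center, and ``even/odd modulo center'' in Definition~\ref{def:evenodd} does not coincide with ``even/odd degree''. Your decomposition $q_{\pi,V}=q_{\mathrm{ev}}+q_{\mathrm{odd}}$ by degree parity therefore does not produce pieces with the parity property you need, and the minimality argument at the end does not apply.

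A salvageable idea in your write-up is the use of an anti-automorphism, but it has to be set up correctly: the map $X\mapsto(\theta_i\theta_j\,\sigma(X_{-j,-i}))_{ij}$, where $\sigma$ is the principal anti-automorphism of $\ug$, \emph{is} an honest anti-automorphism of $\sfm_N(\ug)$ and it fixes $\sff$, hence every $p(\sff)$. The remaining (and genuinely nontrivial) step is to control how $\sigma$ interacts with $\ann M(\lam)$; the naive degree-parity split will not reappear.
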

%
%%%%%%%%%%%%%%%%%%%%%%%%%%%%%%%%%%%%%%%%%%%%%%%%%%%%%%%%%%%%%%%%%%%%%
%
%%%%%%%%%%%%%%%%%%%%%%%%%%%%%%%%%%%%%%%%%%%%%%%%%%%%%%%%%%%%%%%%%%%%%
%
In analogy with the general linear case, 
we shall prove that the roots of the 
minimal polynomial of a $\gg$-module $L(\lam)$ are obtained from $\lam$
via a combinatorial algorithm. If $l$ is an $n$-element sequence
$(l_1,\ldots,l_n)$, let $l'=(l_2,\ldots,l_n)$ and
$l^{*}=(-l_n,\ldots,-l_1)$. The \emph{mirror image} of the $k$th
term of the $2n$-element sequence $l\cup l^{*}$ is the $(2n+1-k)$th
term of the sequence, and a similar convention applies to subsequences.
The \emph{shuffle decomposition} of $l\cup l^{*}$ is constructed inductively
and represents this $2n$-element sequence as a disjoint union 
of a number of pairs of falling subsequences (referred to
as the \emph{parts} of the decomposition), with the additional property
that the parts in each pair are the mirror images of each other.
For an empty $l$, the decomposition is
empty. Suppose that the shuffle decomposition of $l'\cup l'^{*}$ has already
been constructed. Let $S$ be its longest part which begins with
$l_1-1$, and $S^{*}$ the mirror image of $S$.
Then appending $l_1$ to the beginning of $S$ and $-l_1$ to the 
end of $S^{*}$ and leaving the other parts intact, we get
the shuffle decomposition of $l\cup l^{*}$. If no such (non-empty) $S$
existed, we create two new one-term parts, $\{l_1\}$ 
and $\{-l_1\}$ instead. The shuffle decomposition is called
\emph{odd} if one of the parts is a falling subsequence of $l$ ending
in $\eps$ (or equivalently, if one of the parts is a falling subsequence
of $l^{*}$ starting with $-\eps$). Otherwise, the shuffle decomposition is
\emph{even}.
\begin{theorem}\label{thm:minpol_gg}
Let $\lam$ be a weight for a classical Lie algebra
$\gg$ and $l=\lam+\rho$. Denote by $\tilde{A}$ the set of
the first terms of the sequences comprising 
the shuffle decomposition of $l\cup l^{*}$. Let $A=\tilde{A}$
if the shuffle decomposition is even, and $A=\tilde{A}\setminus\{\eps\}$
if it is odd. Then the roots of the minimal polynomial of
the simple highest weight $\gg$-module $L(\lam)$ are
$\{n-1+\eps-\alpha:\alpha\in A\}$.
\end{theorem}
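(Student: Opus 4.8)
The plan is to mimic the inductive strategy used for Theorem~\ref{thm:minpol_gl}, replacing the relative projection formulas of Proposition~\ref{prop:pr_gl} with those of Proposition~\ref{prop:pr_gg}. By Proposition~\ref{prop:technique}, it suffices to locate the poles (with multiplicities) of the scalar series $pr\,\sft_{ij}(u)$ for $-n\le i,j\le n$, and to match them with the combinatorial data of the shuffle decomposition of $l\cup l^*$ after the affine substitution $u\mapsto n-1+\eps-u$ (equivalently $v=u-\rho_1$, so a pole at $v=\alpha$ becomes a root at $n-1+\eps-\alpha$). The induction is on the rank $n$, passing from $\gg'$ of rank $n-1$ to $\gg$ of rank $n$ via the first maximal parabolic with Levi $\gg'\oplus\gl_1$, using $pr=pr'\circ pr_n$.

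First I would set up the induction: assume the theorem holds for $\gg'$, i.e. that the poles of $pr'\,\sft'_{ij}(u)$ ($i,j\ne\pm n$), after the shift, correspond to the first terms of the parts of the shuffle decomposition of $l'\cup l'^*$, with the $\{\eps\}$-deletion convention in the odd case. Then I would feed the four formulas of Proposition~\ref{prop:pr_gg} into $pr=pr'\circ pr_n$. The block with $i,j\ne\pm n$ gives $pr\,\sft_{ij}(u)=(1-\tfrac{1}{v+l_1})\,pr'\sft'_{ij}(u-1)$: this contributes a new simple pole at $v=-l_1$ and shifts the inductively known poles by $1$, which is exactly the effect of prepending $l_1$ to a part beginning with $l_1-1$ (or, if no such part exists, creating the new singleton $\{l_1\}$) — matching the shuffle algorithm for the $l$-side. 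The $nn$-entry $pr\,\sft_{nn}(u)=\tfrac{1}{v+l_1}$ records the new part $\{l_1\}$; its mirror $\{-l_1\}$ and the deformation of the $l^*$-side must come out of the $-n,-n$ formula, whose numerator involves $1-\tr'\sft(u)$ (resp.\ with the extra $2(v+l_1)^{-1}$ term in the symplectic case). Here I would use the Perelomov–Popov-type formulas \eqref{eq:prtr_even}, \eqref{eq:prtr_odd} to compute $pr\,\tr'\sft(u)$ explicitly as a rational function of $v$ with the $l_i^2$ as data, and then read off the poles of $\tfrac{1-pr\,\tr'\sft(u)}{v-l_1}$ (resp.\ symplectic variant). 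The key point is that the product $\prod_i\frac{(v-1)^2-l_i^2}{v^2-l_i^2}$ carries all the combinatorial cancellation: a pole at $v=l_i$ in a factor cancels against a zero at $v=l_i$ of an adjacent factor precisely when $l_i$ and $l_i-1$ (equivalently $-l_i$ and $-l_i+1$ on the mirror side) lie in consecutive positions of the same falling part, so the surviving poles of $pr\,\sft_{-n,-n}(u)$ are exactly the first terms of the parts on the $l^*$-side. The division by $v-l_1$ versus $v+l_1$ is what distinguishes attaching to an existing mirror part from spawning the new $\{-l_1\}$.

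Finally I would dispose of the even/odd dichotomy and the $\{\eps\}$-deletion. The point is that the minimal polynomial is even or odd modulo center (the unnumbered Proposition following Definition~\ref{def:evenodd}), so it is determined by its restriction to either $\Lambda^2\Comp^N$ or $S^2\Comp^N$; in the odd (even) shuffle case the relevant symmetry forces (resp.\ forbids) the root at $v=\eps$, i.e.\ the factor $u-(n-1)-\eps+\eps=u-(n-1)$ corresponding to a part of $l$ ending in $\eps$ gets absorbed. Concretely, one tracks whether, at the bottom of the induction and through the recursion, a falling part of the $l$-side ever terminates at $\eps$; the extra scalar prefactors $\tfrac{v-2\eps}{v-1/2}$ and $\tfrac{v}{v-1/2}-\tfrac{v-1}{v-1/2}(\cdots)$ in \eqref{eq:prtr_even}–\eqref{eq:prtr_odd} are exactly the bookkeeping that removes or retains this root. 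I expect the main obstacle to be precisely this last step: showing rigorously that the pole/zero cancellations in the numerator of the $-n,-n$ formula, together with the scalar prefactor, reproduce the mirror-pair shuffle algorithm \emph{and} the even/odd parity rule in a single uniform argument covering symplectic, even-orthogonal, and odd-orthogonal cases — the general-linear induction had no mirror side and no parity, so this is where genuinely new combinatorics enters. A careful case analysis of which $l_i$ coincide with $l_1$ or $l_1\pm1$, and of the base cases $n=1$ in each of the three families, should close the argument.
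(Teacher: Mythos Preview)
Your inductive scaffold is exactly the paper's: pass from $\gg'$ to $\gg$ via the first maximal parabolic, compose $pr=pr'\circ pr_n$, and read off poles from Proposition~\ref{prop:pr_gg}. The handling of $\sft_{nn}(u)$ and of the middle block $\sft_{ij}(u)$, $i,j\ne\pm n$, is identical to the paper's sketch and to the $\gln$ case.

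Where you and the paper diverge is in the treatment of the entry $\sft_{-n,-n}(u)$. You propose to attack it head-on: feed the Perelomov--Popov formulas \eqref{eq:prtr_even}--\eqref{eq:prtr_odd} into the numerator $1-\tr'\sft(u)$ (resp.\ its symplectic variant), track the pole--zero cancellations in the product $\prod_i\frac{(v-1)^2-l_i^2}{v^2-l_i^2}$, and extract the mirror-side shuffle data directly. You correctly flag this as the main obstacle, and it is indeed the most labor-intensive part of your plan; the analysis is doable but requires separate bookkeeping in each of the three families and a careful accounting of when the $v-l_1$ in the denominator cancels against a zero in the numerator. The paper sidesteps this computation entirely. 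Because the minimal polynomial is even or odd modulo center (the Proposition following Definition~\ref{def:evenodd}), the roots coming from the $(-n,-n)$ corner are determined by symmetry from those coming from the $(n,n)$ corner and the middle block; one never needs to evaluate $pr\,\sft_{-n,-n}(u)$ explicitly. The parity is then used not merely to justify the $\{\eps\}$-deletion at the end, as in your plan, but as the organizing principle that halves the work from the outset.

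So both routes lead to the theorem, but the paper's is shorter: it trades the explicit rational-function analysis of the trace term for a single structural observation about the symmetry type of the minimal polynomial. Your approach has the compensating virtue of being more self-contained---it does not rely on the (unproved in the paper) parity proposition---and it makes the connection to the Perelomov--Popov identities explicit, which is conceptually pleasant. If you want to pursue your version, note that the trace appearing in the third and fourth formulas of Proposition~\ref{prop:pr_gg} is $\tr'\sft(u)$, the partial trace of the rank-$n$ resolvent, so after applying $pr_n$ you should substitute the second formula to rewrite everything in terms of $\tr\sft'(u-1)$ and then invoke the Perelomov--Popov identity for $\gg'$, not for $\gg$; this is what makes the induction close.
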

\begin{proof}[Proof]
The proof proceeds along the same lines as the proof of Theorem
\ref{thm:minpol_gl}. However, instead of dealing with the poles of
$pr \sft_{-n,-n}(u)$ directly, we need to keep track of
whether the minimal polynomial $f$ of $L(\lam)$ is even or odd modulo 
center, Definition \ref{def:evenodd}.
This allows us to restrict
attention to the poles of $pr \sft_{nn}(u)$ and 
$pr \sft_{ij}(u), i,j\ne \pm n$. 
\end{proof}
%
%
%%%%%%%%%%%%%%%%%%%%%%%%%%%%%%%%%%%%%%%%%%%%%%%%%%%%%%%%%%%%%%%%%%%%
%
\section{Behavior of minimal polynomials under transfer}\label{sec:transfer}
%
%%%%%%%%%%%%%%%%%%%%%%%%%%%%%%%%%%%%%%%%%%%%%%%%%%%%%%%%%%%%%%%%%%%%
%
In \cite{Protsak_Transfer}, Section 5, we considered an algebraic 
version of the Howe duality in the context of modules over Lie algebras
forming a reductive dual pair. We first review its definition and then 
investigate the effect this duality has on the minimal polynomials.   
The main result of this section is a noncommutative 
generalization of a well known property of
matrices over a commutative field: suppose that $A$ and $B$ are 
two rectangular matrices of the same size,
so that $AB^t$ and $A^t B$ are square matrices, then the minimal
polynomials $p_{AB^t}(\lambda)$ and $p_{A^t B}(\lambda)$ are either the same 
or differ by a {factor of $\lambda$.} 

\begin{definition}\label{def:alg_duality}
Let $(\gg, \gp)$ is a reductive dual pair of Lie algebras and
$\A$ be a fixed non-zero module over the corresponding Weyl algebra $\weyl$. 
Then a $\gg$-module $V$ and a $\gp$-module $V'$  
are in the algebraic Howe duality with each other if there exists an 
$\Ad(G)$-invariant and $\Ad(G')$-invariant subspace $\A_0$ of $\A$ such
that the quotient $\A/\A_0$ is isomorphic to $V\otimes V'$ as a $\gg$-module
and a $\gp$-module.
\end{definition}
For an irreducible reductive dual pair of general linear Lie algebras
$(\gln, \glk)$, 
the Weyl algebra $\weyl$ may be identified with the algebra of polynomial
coefficient differential operators on $n\times k$ matrices, corresponding
to the realization $W=\Mkn\oplus\Mnk$ for the symplectic vector space.
We denote by $\sfe$ the $n\times n$ matrix \eqref{eq:sfe} 
for $\gln$ and by $\sfep$ the analogous $k\times k$ matrix for $\glk$.
Let $\sfx$ be the $k\times n$ matrix
with entries $x_{ai}$, the coordinate functions on $\Mkn$ 
and $\sfd$ be the $k\times n$ matrix with entries $\dd_{bj}$,
the corresponding partial derivatives. 
The \emph{unnormalized embeddings} $L$ and $R$ of $\glk$ and $\gln$ into 
$\weyl$ are given by the following explicit formulas:
\begin{equation}\label{eq:gen_gl}
L(\sfe')=\sfx\sfd^t, \quad R(\sfe)=\sfx^t\sfd.
\end{equation}
The action of $\glk$ on the polynomial functions on $\Mkn$ 
via $L$ arises from the 
action of the group $GL_k$ on the $k\times n$ matrices by the left 
matrix multiplication, and similarly for $\gln$, $R$, and the right
matrix multiplication. In the language of the classical invariant
theory, the differential operators $R(\sfe_{ij})$ are the 
\emph{polarization operators}
with respect to the $GL_k$-action and likewise for the
operators $L(\sfep_{ab})$ and the $GL_n$-action.
Any module over the Weyl algebra acquires a structure of a $\gln$-module
and $\glk$-module by pulling back the $\weyl$-action via the
maps $R$ and $L$.
\begin{theorem}
Let $(\gg,\gp)=(\gln,\glk)$ and the modules $V$ and $V'$ over 
$\gln$ and $\glk$ are in an unnormalized algebraic Howe duality with each
other. Then the following divisibility
properties hold for their minimal polynomials $q=q_V$ and $q'=q_{V'}$:
\begin{equation}
q(u) {\ \vert\ } uq'(u+k-n), \quad q'(u) {\ \vert\ } uq(u-k+n). 
\end{equation}
\end{theorem}
The statement of the theorem is symmetric with respect to exchanging
$n$ and $k$, and its conclusion given does not depend on the particular
module over the Weyl algebra used to define algebraic Howe duality.
\begin{proof}
The first part of the proof takes place in the Weyl algebra.
Consider the matrices over $\weyl$ obtained by applying
$R$ and $L$ entriwise to the matrices $\sfe$ and $\sfep$.
By an abuse of notation, we will also denote them $\sfe$ and $\sfep$.
Similar conventions apply to the matrix resolvents $\sft(u)$ and
$\sftp(u)$ of $\sfe$ and $\sfep$. 
\begin{prop}
For any $r\geq 0$, the following identity holds: 
\begin{equation}\label{eq:conv_powers}
\sum_l(\sfe)^r_{il}x_{al}=\sum_b(\sfep+(n-k)I_k)^r_{ab}x_{bi}.
\end{equation}
\end{prop}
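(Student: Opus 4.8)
The plan is to prove the identity \eqref{eq:conv_powers} by induction on $r$, treating it as a statement about matrices over the Weyl algebra $\weyl$. The case $r=0$ is trivial: both sides equal $x_{ai}$ (using $\sfe^0 = I_n$ and $(\sfep + (n-k)I_k)^0 = I_k$). For the inductive step the key computation is a \emph{single-power} commutation identity
\begin{equation*}
\sum_l (\sfe)_{il}\, x_{al} = \sum_b (\sfep + (n-k)I_k)_{ab}\, x_{bi},
\end{equation*}
which moves one copy of the generator matrix past the coordinate matrix $\sfx$ at the cost of switching from the right embedding $R$ to the left embedding $L$ (shifted by $(n-k)I_k$). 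This base identity should follow directly from the explicit formulas \eqref{eq:gen_gl}, namely $R(\sfe_{il}) = \sum_c x_{cl}\dd_{ci}$ and $L(\sfep_{ab}) = \sum_j x_{aj}\dd_{bj}$, together with the Weyl algebra relation $[\dd_{ci}, x_{al}] = \del_{ca}\del_{il}$; the extra scalar $(n-k)\del_{ab}x_{bi}$ arises exactly from the $N$ (here $n$ versus $k$) diagonal contractions that appear when one normal-orders $\dd_{ci} x_{al}$ on one side but not the other.

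Granting the base identity, I would run the induction as follows. Write $\sfe^{r} = \sfe \cdot \sfe^{r-1}$, so that
\begin{equation*}
\sum_l (\sfe)^{r}_{il} x_{al} = \sum_{l,m} (\sfe)_{im} (\sfe)^{r-1}_{ml} x_{al}.
\end{equation*}
The nontrivial point is that $(\sfe)^{r-1}_{ml}$ and $x_{al}$ do \emph{not} commute, so one cannot immediately apply the inductive hypothesis; instead one first moves $(\sfe)_{im}$ — a degree-one element — to the right using the base identity in the form of a commutator, or, more cleanly, one notes that the operators $R(\sfe_{ij})$ are the polarization operators for the $GL_k$-action and $L(\sfep_{ab})$ for the $GL_n$-action, and these two actions commute. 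Concretely, I would argue that $\sum_l (\sfe^{r})_{il} x_{al}$ is obtained by applying the $GL_k$-polarization operator $(\sfe^r)_{i\bullet}$ to the "vector" $x_{a\bullet}$, and since the column index $a$ is a $GL_n$-index, the result can equally be written using the $GL_n$-action, which is generated by the $L(\sfep_{ab})$; chasing through how the shift $(n-k)I_k$ propagates under taking $r$th powers is the combinatorial heart of the matter.

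The step I expect to be the main obstacle is precisely the bookkeeping of the shift $(n-k)I_k$ through the induction: because the base identity introduces an \emph{additive} scalar shift each time a power of $\sfe$ is peeled off, one has to check that iterating produces exactly $(\sfep + (n-k)I_k)^r$ rather than, say, $\sfep^r + (n-k)^r I_k$ or some other mismatched expression. The cleanest way to control this is to prove the slightly stronger statement that for the matrix resolvents one has $\sum_l \sft_{il}(u)\, x_{al} = \sum_b \sftp_{ab}(u - n + k)\, x_{bi}$ as an identity of formal power series in $u^{-1}$ over $\weyl$, from which \eqref{eq:conv_powers} follows by extracting the coefficient of $u^{-r-1}$ (using the expansion \eqref{eq:res_gl}). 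That resolvent form is the natural object — it linearizes the shift into a substitution $u \mapsto u - n + k$ — and it is exactly what will be needed downstream to compare the formal resolvents $\sft(u)$ and $\sftp(u)$, hence the minimal polynomials, in the proof of the theorem on transfer.
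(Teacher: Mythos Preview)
Your approach is the paper's: induction on $r$, with the $r=1$ case done by an explicit Weyl-algebra normal-ordering computation and the inductive step using that $R(\gln)$ and $L(\glk)$ commute in $\weyl$. One correction to your outline of the inductive step: after writing $\sum_l(\sfe)^r_{il}x_{al}=\sum_{l,m}(\sfe)_{im}(\sfe)^{r-1}_{ml}x_{al}$, you \emph{can} apply the inductive hypothesis immediately --- the inner sum $\sum_l(\sfe)^{r-1}_{ml}x_{al}$ is exactly of the required form, and since the hypothesis is an identity in $\weyl$ it survives left multiplication by $(\sfe)_{im}$. The genuine step comes \emph{after} that: once the hypothesis yields $\sum_{m,b}(\sfe)_{im}(\sfep+(n-k)I_k)^{r-1}_{ab}x_{bm}$, you commute $(\sfe)_{im}$ past the $\sfep$-polynomial (this is where ``the two actions commute'' enters) and then apply the $r=1$ case to $\sum_m(\sfe)_{im}x_{bm}$. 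Done in this order, the shift bookkeeping is automatic --- you get $(\sfep+(n-k)I_k)^{r-1}\cdot(\sfep+(n-k)I_k)=(\sfep+(n-k)I_k)^r$ on the nose --- so the resolvent reformulation, while correct (the paper in fact derives it \emph{from} this proposition), is not needed to tame the combinatorics. A small index slip: from $R(\sfe)=\sfx^t\sfd$ one reads off $R(\sfe_{il})=\sum_c x_{ci}\dd_{cl}$, not $\sum_c x_{cl}\dd_{ci}$.
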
 
\begin{proof}
We apply induction in $r$. For $r=0$ both sides are equal to $x_{ai}$. For
$r=1$, we compute
\begin{equation*}
\begin{aligned}
\sum_l\sfe_{il} x_{al} & = \sum_{b,l} x_{bi} \dd_{bl} x_{al}=
\sum_{b,l} ( x_{al} \dd_{bl} x_{bi}  + \del_{ab} x_{bi} - \del_{il} x_{al}) 
= \\ & = (n-k)x_{ai} + \sum_b \sfep_{ab} x_{bi} 
= \sum_b  (\sfep+(n-k)I_k)_{ab} x_{bi}.
\end{aligned}
\end{equation*}
If the identity is true for $r\geq 1$ then 
\begin{equation*}
\begin{aligned}
& \sum_l(\sfe)^{r+1}_{il} x_{al} = \sum_{l,m}\sfe_{im}(\sfe)^r_{ml}x_{al} =\\
=& \sum_{c,m}\sfe_{im}(\sfep+(n-k)I_k)^r_{ac}x_{cm} = 
\sum_{c,m}(\sfep+(n-k)I_k)^r_{ac}\sfe_{im} x_{cm} =\\  
=& \sum_{c}(\sfep+(n-k)I_k)^r_{ac}(\sfep+(n-k)I_k)_{cb} x_{bi}
= (\sfep+(n-k)I_k)^{r+1}_{ab} x_{bi}.
\end{aligned}
\end{equation*}
Since each entry of the matrix $\sfe$ commutes with each entry of the 
matrix $\sfep$, the exchange of the $\sfe$-factor and $\sfep$-factor
in the second line is justified.  
\end{proof}
Multiplying the $r$th identity \eqref{eq:conv_powers} by $u^{-1-r}$ 
and summing over $r\geq 0$, we obtain the identity 
\begin{equation}\label{eq:conv1}
\sum_l\sft_{il}(u)x_{al}=\sum_b\sftp_{ab}(u+k-n)x_{bi}, \quad
\sft(u)\sfx^t=(\sftp(u+k-n)\sfx)^{t}.
\end{equation}
Now multiplying both sides by $\dd_{aj}$ and
summing over $a$, we see that 
\begin{equation*}
\sum_l \sft_{il}(u)\sfe_{lj} = \sum_{a,l} \sft_{il}(u)x_{al}\dd_{aj} = 
\sum_{a,b}\sftp_{ab}(u+k-n)x_{bi}\dd_{aj}.
\end{equation*}
Adding $\sum_l\sft_{il}(u)(uI_n-\sfe)_{lj}=(I_n)_{ij}$ to both sides, 
we finally arrive at the identity
\begin{equation*}
u\sft_{ij}(u) = (I_n)_{ij}+\sum_{a,b}\sftp_{ab}(u+k-n)x_{bi}\dd_{aj},\quad
u\sft(u) = I_n + (\sftp(u+k-n)\sfx)^t\sfd.
\end{equation*} 
Now let us consider the quotient $\A/\A_0$ of a module over the Weyl
algebra that realizes the algebraic Howe duality between $V$ and $V'$. 
Since $q'$ is the minimal polynomial of the $\glk$-module $V'$, 
each matrix entry of $q'(u+k-n)\sftp(u+k-n)$ acts 
on this quotient by a polynomial in $u$. 
It follows that each matrix entry of $uq'(u+n-k)\sft(u)$ acts 
on the $\gln$-module $V$ by a polynomial in $u$, establishing that
$uq'(u+n-k)$ is divisible by the minimal polynomial $q(u)$ of the 
$\gln$-module $V$.
\end{proof} 
%
%%%%%%%%%%%%%%%%%%%%%%%%%%%%%%%%%%%%%%%%%%%%%%%%%%%%%%%%%%%%%%%%%%%%
%
\section{Applications to primitive ideals}\label{sec:primitive}
%
%%%%%%%%%%%%%%%%%%%%%%%%%%%%%%%%%%%%%%%%%%%%%%%%%%%%%%%%%%%%%%%%%%%%
%
%
\begin{theorem}
Suppose that a simple highest weight module $L(\lam)$ over $\gln$ has
regular infinitesimal character. Then the minimal polynomial of
$L(\lam)$ is uniquely determined by the tau invariant of $L(\lam)$.
Moreover, for simple highest weight modules in the infinitesimal
character of the trivial representation, the minimal polynomial and
the $\tau$-invariant of $L(\lam)$ completely determine each
other. Inclusions of primitive ideals correspond to divisibility of
their minimal polynomials and inclusions of their tau invariants.
\end{theorem}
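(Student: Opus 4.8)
The plan is to leverage Theorem \ref{thm:minpol_gl}, which expresses the roots of the minimal polynomial of $L(\lam)$ (with multiplicities) in terms of the last terms of the parts of the shuffle decomposition of $l=\lam+\rho$. The key observation is that this data is essentially combinatorial: the shuffle decomposition of $l$ depends only on $l$ as a \emph{sequence}, but when the infinitesimal character is fixed, the relevant invariant is $l$ up to the $W$-action (permutations), and the shuffle decomposition of a sorted sequence is governed by how equal-value blocks are arranged, which is precisely what the $\tau$-invariant records. First I would recall that for $\gln$ the $\tau$-invariant of $L(\lam)$ is the set of simple roots $\alpha_i$ with $l_i < l_{i+1}$ (equivalently, $\alpha_i \notin \tau(L(\lam))$ iff $l_i > l_{i+1}$), and that by Duflo's theorem and the Kazhdan--Lusztig/Joseph combinatorics, in the integral case the primitive ideal $\ann L(\lam)$ is determined by the RSK-type data attached to $l$; the simplified RSK alluded to in the introduction is exactly the shuffle decomposition.

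The main steps I would carry out are: (1) For regular integral infinitesimal character, show that the multiset $A$ of last terms of the parts of the shuffle decomposition of $l$ is recoverable from the $\tau$-invariant. The idea is that the $\tau$-invariant determines the ``descent/ascent pattern'' of $l$, and since $l$ is regular (all $l_i$ distinct) with fixed infinitesimal character, the actual values of the $l_i$ are fixed as a \emph{set}; combining the set of values with the ascent--descent pattern recovers $l$ as a sequence, hence recovers its shuffle decomposition and thus $A$. Conversely, the minimal polynomial determines $A$ as a multiset, and one must check this pins down the $\tau$-invariant --- here I would argue that knowing which values survive as ``part-minima'' constrains the descent pattern enough, but this direction is where I expect the argument to be subtler and I would state it carefully, perhaps only in the trivial infinitesimal character case where I can be completely explicit. (2) For the infinitesimal character of the trivial representation, $l=\rho$ up to $W$, so the $l_i$ form the set $\{0,1,\dots,n-1\}$; then both $\tau$-invariant and shuffle decomposition are pure functions of the permutation sorting $l$, and I would exhibit an explicit bijection between $\tau$-invariants occurring and the possible multisets $A$, giving the ``completely determine each other'' claim. (3) For the ordering statement, I would invoke the remark following Definition \ref{def:minpol} (inclusion of ideals forces divisibility of minimal polynomials) together with the known description (Joseph, Barbasch--Vogan) that inclusion of primitive ideals in the integral regular case refines the $\tau$-invariant order, then check the two refinements are compatible via the combinatorics of shuffle decompositions under ``merging'' of equal values.

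The principal obstacle will be step (1), the converse direction: showing the minimal polynomial (i.e. the multiset $A$ of part-minima, with multiplicities) determines the $\tau$-invariant in the regular case. A priori two different descent patterns of $l$ --- hence two different $\tau$-invariants --- could yield the same multiset of part-minima. I expect this is in fact \emph{false} in general, which is consistent with the paper's own hedging (``for a singular highest weight $\lam$, the minimal polynomial can be a stronger invariant''; and the theorem as stated only claims the minimal polynomial is \emph{determined by} $\tau$ for general regular infinitesimal character, with the \emph{mutual} determination reserved for the trivial infinitesimal character). So the honest plan is: prove $\tau \Rightarrow$ minimal polynomial for all regular $\lam$ by the value-set-plus-pattern argument above; prove the equivalence only in the trivial-infinitesimal-character case by the explicit bijection; and deduce the ordering statement by combining the two known partial orders. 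I would be careful to phrase the general-regular part as a one-directional implication and to isolate the genuinely new content (that divisibility of minimal polynomials, $\tau$-inclusion, and primitive ideal inclusion all coincide in the relevant range) as the payoff, citing \cite{JAN} for the primitive-ideal background.
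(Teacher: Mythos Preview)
The paper states this theorem in Section~\ref{sec:primitive} without proof, so there is no authorial argument to compare your plan against. That said, your proposal has a genuine gap at its starting point.

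You identify the $\tau$-invariant of $L(\lambda)$ with the ascent (or descent) set of the sequence $l=\lambda+\rho$. This is not correct: the $\tau$-invariant is an invariant of the primitive ideal $\ann L(\lambda)$, whereas the descent pattern of $l$ is not. For $n=3$ in the trivial infinitesimal character, take $l=(2,0,1)$ and $l'=(0,2,1)$. By Theorem~\ref{thm:minpol_gl} both have minimal polynomial $u(u-1)$, and since there are exactly four primitive ideals in this block (one per involution in $S_3$) while the minimal polynomial already takes four distinct values on the six weights, these two weights must share a primitive ideal --- yet the descent sets of $l$ and $l'$ are $\{1\}$ and $\{2\}$ respectively. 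The correct description is that $\tau(L(\lambda))$ is the \emph{right} descent set of the Weyl group element $w$ with $\lambda=w\cdot\lambda_{\mathrm{antidom}}$, equivalently the descent set of the Robinson--Schensted recording tableau of $w$; this is what is constant on left cells. Your next step, ``value set plus descent pattern recovers $l$ as a sequence,'' is therefore beside the point, and it is also false on its own terms (e.g.\ $(3,1,2)$ and $(2,1,3)$ share both their value set and their descent pattern). Since the minimal polynomial depends only on the annihilator, the multiset $A$ of shuffle-decomposition minima is automatically a left-cell invariant; what is actually needed is a direct combinatorial bijection between the possible $A$'s and the possible $\tau$-invariants (both are naturally indexed by subsets of $\{1,\dots,n-1\}$ in the trivial-infinitesimal-character case), together with a verification that this bijection is realized by the shuffle decomposition. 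Your plan does not supply this link.
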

\begin{theorem}
Let $\pi$ be the adjoint representation of $\sln$ extended to $\gln$
trivially on the center. Then the multiplicity of $\pi$ in the
primitive quotient $\ugn/\ann L(\lam)$ is one less than the degree of
the minimal polynomial of $L(\lam)$.
\end{theorem}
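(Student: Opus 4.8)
The plan is to reduce the multiplicity statement to a
computation of the minimal polynomial's degree together with a decomposition
of the adjoint-isotypic component of the universal enveloping algebra.
Recall that $\gln$ acts on $\End(\Comp^n)\cong\gln$ by the adjoint
representation, and the canonical $\gg$-equivariant map
$p:\End\pi\to\ugn$ used in Definition~\ref{def:minpol} identifies
$\sfe_{ij}\mapsto\sfe_{ij}\in\ugn$; thus the $n^2$ matrix entries
$\sff_{\pi,ij}=\sfe_{ij}$ span an $\ad(\gln)$-invariant copy of the
reducible module $\gln\cong\sln\oplus\Comp$ inside $\ugn$. Writing
$I=\ann_{\ugn}L(\lam)$ and $B=\ugn/I$ for the primitive quotient, the
multiplicity of $\pi$ (the adjoint of $\sln$ extended trivially to the
center) in $B$ is $\dim\Hom_{\gln}(\pi, B)$. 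The key point is that this
Hom-space is spanned by the images in $B$ of the matrices
$\sfe^k=f(\sfe)$ (more precisely, their tracefree parts), for $k\ge 1$:
indeed, by Proposition~\ref{prop:comrel_gl} (or rather its
$\ugn$-analogue, \eqref{eq:sft_gl}), each power $\sfe^k$ has entries
transforming as the adjoint representation, and these exhaust the
$\pi$-isotypic part of $B$ because $B$ is a quotient of
$\ugn=\bigoplus_k S^k(\gln)$ whose $\pi$-multiplicity in degree $k$ grows
but the images collapse precisely by the minimal polynomial relation.

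First I would make precise the claim that $\Hom_{\gln}(\pi,B)$ is spanned
by $\{$image of $\sfe^k : k\ge 1\}$. For this I would invoke the formal
resolvent: the entries of $\sft_{\sff_\pi}(u)=(uI_n-\sfe)^{-1}$, reduced
modulo $I$, span (together with the scalar $u^{-1}$ term, which accounts
for the trivial summand) the $\pi$-isotypic component of
$B[[u^{-1}]]$ over $\Comp((u^{-1}))$. By Proposition~\ref{prop:technique}
and the corollary to Proposition~\ref{prop:min_pole}, the minimal
polynomial $q_{\pi,L(\lam)}(u)$ is the least common denominator of these
entries viewed in $B$, so the $\Comp$-span of the coefficients of
$u^{-k-1}$, i.e. of $\sfe^k$ in $B$, has dimension exactly
$\deg q_{\pi,L(\lam)}$. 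Separating the one-dimensional trace part (the
Gelfand invariant $\tr\sfe^k$, which lies in the center $\zgn$ and hence
acts by a scalar on $L(\lam)$, contributing the trivial isotype, not
$\pi$), the tracefree parts $\sfe^k-\tfrac1n(\tr\sfe^k)I_n$ span a space
of dimension $\deg q_{\pi,L(\lam)}-1$ inside $\Hom_{\gln}(\pi,B)$.

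Next I would argue the reverse inclusion: that \emph{every} copy of $\pi$
in $B$ arises this way. Here the essential input is again
Proposition~\ref{prop:criterion}: an $\ad(\gln)$-invariant subspace
$P\subseteq\ugn$ isomorphic to $\pi$ annihilates $L(\lam)$ iff
$pr_{\gg/\car}(p)$ vanishes at $\lam$ for $p$ in the zero-weight space
$P_0$. Since all copies of $\pi$ inside a fixed degree-$k$ component
$S^k(\gln)$ of $\ugn$ that survive in $B$ must be ``detected'' by the
Harish-Chandra projection at $\lam$, and since the relevant information is
a single scalar per copy (the value of the projection of the
zero-weight vector, which is a polynomial in $\sfh_1,\dots,\sfh_n$),
one shows that modulo $I$ the $\pi$-isotypic part of $\ugn$ is at most
$(\deg q_{\pi,L(\lam)}-1)$-dimensional — precisely because the functional
``evaluate the Harish-Chandra projection at $\lam$'' has the minimal
polynomial as its obstruction. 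Combining the two inclusions gives
$\dim\Hom_{\gln}(\pi,B)=\deg q_{\pi,L(\lam)}-1$, which is the assertion.

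\textbf{Main obstacle.} The delicate step is the reverse inclusion: ruling
out ``hidden'' copies of $\pi$ in $B$ not coming from powers of $\sfe$.
A priori $S^k(\gln)$ contains $\pi$ with multiplicity roughly $k$, so one
must show all but one of these copies (the one spanned by $\sfe^k$) either
already vanish in $\ugn$ modulo lower-degree adjoint copies, or are
forced to coincide with earlier ones in $B$; equivalently, that the map
from the ``universal'' span $\bigoplus_{k\ge 0}\Comp\cdot\sfe^k$ to
$\Hom_{\gln}(\pi\oplus\mathrm{triv},B)$ is surjective. I expect this to
follow from the fact proved earlier (Gould's theorem, via
\cite{Gould_characteristic}) that the matrix $\sff_{\pi}$ satisfies a
polynomial with coefficients in $\zgn$, so that in any quotient with
infinitesimal character the algebra generated by $\sfe$ over its center
is spanned by $I_n,\sfe,\dots,\sfe^{d-1}$ with $d=\deg q_{\pi,L(\lam)}$;
combined with the observation that $\zgn$ already accounts for the
trivial isotype, the bookkeeping closes. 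The one subtlety to handle with
care is the interaction between the center-valued coefficients and the
splitting $\gln=\sln\oplus\Comp$, to be sure the final count is $d-1$ and
not $d$.
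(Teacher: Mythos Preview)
The paper states this theorem in Section~\ref{sec:primitive} without proof, so there is no argument of the author's to compare against. I will therefore assess your proposal on its own merits.

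Your lower bound is essentially correct. The entries of $\sfe^k$ do transform as $\gln\simeq\pi\oplus\Comp$ under $\ad$, and in the primitive quotient $B=\ugn/I$ the minimal polynomial relation $q(\sfe)=0$ shows that $I_n,\sfe,\dots,\sfe^{d-1}$ (with $d=\deg q$) span the $\Comp$-algebra they generate. The tracefree parts of $\sfe,\dots,\sfe^{d-1}$ are independent: a relation $\sum_{k=1}^{d-1}a_k\sfe^k=cI_n$ in $B$ forces $c\in B^{\gln}=\Comp$ (infinitesimal character), hence $(\sum a_k u^k-c)$ is divisible by $q$, which by degree gives $a_k=0$ and $c=0$. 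So you do get at least $d-1$ independent copies of $\pi$ in $B$.

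The gap is in your upper bound. You want every copy of $\pi$ in $B$ to come from the tracefree part of some polynomial in $\sfe$, and you try to extract this from Proposition~\ref{prop:criterion}. That proposition only tells you, for a \emph{given} $\ad$-copy $P\simeq\pi$ inside $\ugn$, whether $P$ maps to zero in $B$; it says nothing about whether the surviving copies lie in the span of the $\sfe^k$. Your heuristic that ``the functional `evaluate the Harish-Chandra projection at $\lam$' has the minimal polynomial as its obstruction'' conflates two different things: the zero-weight space of $\pi$ is $(n-1)$-dimensional, so there are $n-1$ such functionals on each copy, and their vanishing locus in the space of \emph{all} $\pi$-copies in $\ugn$ has no a~priori reason to be controlled by $q$ unless you already know those copies are $\zgn$-combinations of the $\sfe^k$. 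Likewise, Gould's characteristic identity only bounds the span of $\{\sfe^k\}$; it does not show this span exhausts the $\pi$-isotypic component.

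What you actually need is a structural input you do not invoke: Kostant's separation theorem $U(\gg)\simeq\zg\otimes\Harm$, together with the identification of the $\pi$-isotypic part of the harmonics. For $\gln$ the multiplicity of $\pi$ in $\Harm$ equals $\dim\pi_0=n-1$, and one checks that the tracefree parts of $\sfe,\sfe^2,\dots,\sfe^{n-1}$ give a $\zgn$-basis of $\Hom_{\gln}(\pi,\ugn)$. Granting this, passage to $B$ (where $\zgn$ acts by scalars) reduces the $\pi$-multiplicity in $B$ to the $\Comp$-dimension of the span of these $n-1$ elements modulo $I$, and your minimal-polynomial argument then correctly computes that dimension as $d-1$. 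Without this structural fact, your surjectivity step is unsupported.
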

%
%
%%%%%%%%%%%%%%%%%%%%%%%%%%%%%%%%%%%%%%%%%%%%%%%%%%%%%%%%%%%%%%%%%%%%
%
\section{Concluding remarks}\label{sec:remarks}
%
%%%%%%%%%%%%%%%%%%%%%%%%%%%%%%%%%%%%%%%%%%%%%%%%%%%%%%%%%%%%%%%%%%%%
%
It should be pointed out that in analogy with the classification of
the conjugacy classes of matrices (or of the adjoint orbits in $\gg$),
the minimal polynomial is only the first member in a larger family of
invariants of modules and ideals. Therefore, it is insufficient for
classification of the primitive ideals of $\ug$. In the case of the
general Lie algebra $\gln$, we introduced in \cite{Protsak_divisor} a
family of invariants $q_k, 1\leq k\leq n$ generalizing the minimal
polynomial. Their definition is inspired by Oshima's construction of
explicit generators for the induced ideals of $\ugn$,
\cite{Oshima_quantization}. These invariants may be viewed as
quantizations of the Fitting invariants of $n\times n$ matrices and
analogously to the theory of elementary divisors of matrices, they
separate completely prime primitive ideals of $\ugn$. We hope that
similar generalizations are possible also for the symplectic and
orthogonal Lie algebras.
%
%
%%%%%%%%%%%%%%%%%%%%%%%%%%%%%%%%%%%%%%%%%%%%%%%%%%%%%%%%%%%%%
%
\bibliographystyle{plain}
\bibliography{refer} % The bibtex bibliography file
%
%%%%%%%%%%%%%%%%%%%%%%%%%%%%%%%%%%%%%%%%%%%%%%%%%%%%%%%%%%%%%

\end{document}